\newtheorem{theorem}{Theorem}[section]
\newtheorem{lemma}[theorem]{Lemma}
\newtheoremstyle{myremark}
{6pt}
{6pt}
{\rmfamily}
{}
{\bfseries}
{.}
{.5em}
{}
\newcommand{\rd}{\mathrm{d}}            
\DeclareMathOperator{\Real}{Re}  
\title{Heavy-Tailed Branching Random Walks on Multidimensional Lattices. A Moment Approach\thanks{Lomonosov Moscow State University, Moscow, Russia,
e-mails: rytova.anastasiya@gmail.com, yarovaya@mech.math.msu.su}}
\author{Anastasiya Rytova, Elena Yarovaya}
\date{}
\begin{document}
\maketitle

\begin{abstract}
We study a continuous-time branching random walk on the lattice $\mathbb{Z}^{d}$, $d\in \mathbb{N}$, with a single source of branching, that is the lattice point where the birth and death of particles can occur. The random walk is assumed to be homogeneous, symmetric and irreducible but, in contrast to previous investigations, the random walk transition intensities $a(x,y)$ decrease as $|y-x|^{-(d+\alpha)}$ for $|y-x|\to \infty$, where $\alpha\in(0,2)$,  that leads to an infinite variance of the random walk jumps. The~mechanism of the birth and death of particles at the source is governed by a continuous-time Bienaym\'e-Galton-Watson
branching process.
The source  intensity is characterized by a certain parameter $\beta$. We calculate the
long-time asymptotic behaviour for all integer moments for the number of particles
at each lattice point and for the total population size.
With respect to the  parameter $\beta$ a non-trivial critical point $\beta_c>0$ is
found for every $d\geq 1$. In particular, if $\beta>\beta_{c}$ the evolutionary operator generated a behaviour of the first moment for the number of particles   has  a positive eigenvalue.
The existence of a positive eigenvalue  yields an exponential growth in $t$ of the particle numbers in the  case $\beta>\beta_c$ called \emph{supercritical}.
Classification of the branching random walk treated as \emph{subcritical} ($\beta<\beta_c$) or \emph{critical} ($\beta=\beta_c$) for the heavy-tailed random walk jumps is more complicated than for a random walk  with a finite variance of jumps.
We study the asymptotic behaviour of all integer moments  of a number of particles  at any point $y\in\mathbb{Z}^d$ and of the
particle population on $\mathbb{Z}^d$ according to
the ratio $d/\alpha$.
\end{abstract}

\section{Introduction}
\label{intro}
We study a continuous-time branching random walk (BRW) on the lattice
${\mathbb Z}^d$ with a single \emph{source} of branching, that is the lattice point where the birth and death of particles can occur. Informally, the process can be described as follows.
Suppose that initially there is a single particle at a point $x\in{\mathbb Z}^d$,
which then performs a random walk on ${\mathbb Z}^d$ until reaching the
source $x_0=0$. At the origin, the particle waits an exponentially distributed time and then  jumps to another lattice point distinct from the origin or dies or gives a random number of offspring.

In case no
branching has occurred, the particle continues its walk until the next return
to the source, and so on. Each of the newborn particles evolves according
to the same rule, independently of the others.
The simplest model with one source of branching, first was introduced in \cite{Y91:e} and
after that reconsidered in a more general setting in a series of publications, see, e.g. \cite{ABY98-1:e, ABY00:e}. Similarly, in \cite{Y2013, Y18-TPA:e, KY} there were
considered BRWs with finitely many sources $x_1,\dots,x_N$.
BRW models is of principal interest, especially in cases when  the branching environment is spatially
inhomogeneous and  the phase space ${\mathbb Z}^d$ is
unbounded, see, e.g. \cite{Y91:e}.
It is well-known in the theory of random media that
inhomogeneity of the environment plays a key role in the formation of
anomalous properties of transport processes which is conventionally expressed in terms of the so-called intermittency concept. For the detailed discussion of the concept of
intermittency of evolutionary random fields, and in particular the concept
of ``strong centres'' of the field generation, see, e.g., \cite{ABMY00:e} and the bibliography therein.
In this context, the model of BRW in the presence of a single source of branching
(or, more generally, with finitely many sources) may be viewed as taking
into account principal ``perturbation'' terms.
On the other hand, the non-compactness of the space destroys
the pure-point spectrum of the operator associated with the process~\cite{Y91:e,ABY00:e,YarBRW:e},
 so that the conventional spectral
methods are no more
readily applicable~\cite{YarBRW:e}.

In the paper, it is assumed that the branching mechanism at the source is governed by a continuous-time Bienaym\'e-Galton-Watson
branching process. It is also assumed that the underlying random walk is homogeneous, symmetric and irreducible but, in contrast to previous investigations, the random walk transition intensities $a(x,y)$ decrease as $|y-x|^{-(d+\alpha)}$ for $|y-x|\to \infty$, where $\alpha\in(0,2)$,  that leads to an infinite variance of the random walk jumps. For such a BRW (called a heavy-tailed BRW) the precise asymptotic behaviour of  all integer moments of the number of particles at each lattice point  and on the entire lattice is remained unexplored in the case when
the positive pure-point spectrum of the operator is empty. Different models of  heavy-tailed random walks without branching of particles were investigated by many authors, see, e.g.,~\cite{bor} and the bibliography therein. The random walk transition probabilities $p(t,x,y)$ for symmetric heavily tailed random walks were investigated in~\cite{MR3349977} under an appropriate regularity condition for $|y-x|+t \to \infty$. The results for the random walk transition probabilities  $p(t,x,y)$ without any regularity conditions, but for fixed space coordinates were obtained in~\cite{RY16:MN}.
So the main goal of the present work is to find the long-time asymptotic behaviour of
all integer moments for the number of particles at each lattice point and
for the total population size, for the case of a heavy-tailed BRW with one source of branching. A BRW with the sources of branching at every lattice point is considered, e.g.,
in~\cite{MR3685637}. Some applications of the theory of heavy-tailed processes can be found, e.g., in~\cite{ZKD15}. Below, we achieve the goal by studying the chain
of the backward differential-difference equations for the moments. The difference operator involved in each moment equation is of the form
$\mathcal{H}_{\beta}:=\mathcal{A}+\beta\delta_0(\cdot)$, where $\mathcal{A}$ is the random walk operator, and the delta-term reflects the presence of the branching
source. The spectral properties of the operator $\mathcal{H}_{\beta}$ play an essential role
in the problem. With respect to a certain parameter $\beta$
characterizing intensity of the source, a non-trivial critical point $\beta_c>0$ is
found for every $d\geq 1$ (unlike a BRW with a finite variance of jumps for which $\beta_c>0$ only for $d\geq 3$) see, e.g.~\cite{Y13-CommStat:e}, which is related to the existence of a positive eigenvalue $\lambda_0$ of the
operator $\mathcal{H}_{\beta}$. The probabilistic meaning of the parameter
$\lambda_0$ is that it determines the rate of the process exponential
growth~\cite{KY}, and hence the case
$\beta>\beta_{c}$ corresponds to the supercritical regime.

Classification of the branching random walk as subcritical or critical for the case of heavy-tailed random walk jumps is more complex than for a random walk  with a finite variance of jumps~\cite{RytYar19}.
We study how the asymptotic behaviour, as $t\to\infty$, of the moments of the particle population and of the number of particles  at the point $y\in\mathbb{Z}^d$ depends on the parameters of walking, branching, and the ratio $d/\alpha$.
The various combinations of which allow classifying the BRW as subcritical, critical or supercritical.
Here, we  give also a complete proof  of the results for subcritical and  critical cases
announced in~\cite{RytYar19}.

The structure of the work is as follows. In \S 2, a formal description of BRW with one branching source and main previous results are reminded. In \S 3, we investigate the structure of the non-negative discrete spectrum of a BRW, lemma~\ref{lem316}, and find the limits of the mean of the local number of particles, theorem~\ref{th331}. In \S 4, theorem~\ref{th523} about asymptotics of all moments of particle number for critical BRW is obtained. At last, in \S 5, we study the asymptotic behaviour of all moments of particle number for subcritical BRW, theorem~\ref{eqlemma532}.

\section{Model and Previous Results}\label{sect:Model}

We consider a stochastic process for a system of particles when every particle can walk on the lattice points and give offsprings or die at the origin (called the \emph{source} of branching).

The underlying random walk is determined by a matrix of transition intensities $A = (a(x, y))_{x,y\in\mathbb{Z}^d}$, where $a(x, y)\geq0$ for $y\not=x$, $-\infty<a(x, x)<0$ and $\sum_{y\in\mathbb{Z}^d}a(x,y)=0$. By the axiomatics from~\cite{GS}, the corresponding transition probability $p(h, x, y)$ of a particle movement from point $x$ to point $y$ over a short period of time $h$, satisfies the equations
\begin{align*}
p(h, x, y) &= a(x, y)h + o(h)\qquad\ \text{for}\ y\not=x,\\
p(h, x, x) &= 1 + a(x, x)h + o(h).
\end{align*}
Such relations imply that the Kolmogorov's backward equations hold
\begin{equation}
\label{backward}
\frac{\rd p(t, x, y)}{\rd t} = \sum_{x'\in\mathbb{Z}^d}a(x, x')p(t, x', y), \quad p(0, x, y) = \delta_y(x),
\end{equation}
where $\delta_y(\cdot)$ is the discrete Kronecker delta-function on $\mathbb{Z}^d$.

We consider a symmetric random walk with $a(x, y) = a(y, x)$. Then, due to the sufficient condition of the boundedness of linear operators given in~\cite[lemma~1]{Y2013},
the expression
\begin{equation}\label{E:defA}
(\mathcal{A}u)(x):=\sum_{x'\in\mathbb{Z}^d}a(x, x')u(x'), \quad u\in\ell^p(\mathbb{Z}^d),
\end{equation}
defines the bounded linear operator $\mathcal{A}:\ell^p(\mathbb{Z}^d)\rightarrow\ell^p(\mathbb{Z}^d)$ for any $p\in[1, \infty]$. Note that the operator $\mathcal{A}$ is self-adjoint in $\ell^2(\mathbb{Z}^d)$. Now we can rewrite equations~\eqref{backward} as differential equations in Banach spaces
\begin{equation}\label{backward2}
\frac{\rd p(t, x, y)}{\rd t} = (\mathcal{A}p(t, \cdot, y))(x), \quad p(0, x, y) = \delta_y(x).
\end{equation}

We assume that $a(x, y) = a(0, y-x)$ which can be treated as a spatial homogeneity of the random walk. Then the values of $a(x, y)$ can be expressed by a function of one argument as follows: $a(x, y) = a(y-x)$, where $a(z):=a(0, z)$, $z\in\mathbb{Z}^d$.
We assume also that the random walk is irreducible which means that for every $z\in\mathbb{Z}^d$ there exists a set of vectors $z_1,\dots,z_k\in\mathbb{Z}^d$ such that $z=\sum_{i=1}^kz_i$ and $a(z_i)\not=0$ for $1\leq i\leq k$.

At last, for the random walk, we assume that for all $z\in\mathbb{Z}^{d}$ with sufficiently large norm the following asymptotic relation holds
\begin{equation}\label{E:infindisp}
a(z)\sim\frac{H\left(\frac{z}{|z|}\right)}{|z|^{d+\alpha}},\quad\alpha\in(0,2),
\end{equation}
where $H(\cdot)$ is a continuous positive symmetric function on the
unit sphere $\mathbb{S}^{d-1}$. This implies that the variance of jumps of the~random walk
\begin{equation*}
\sigma^2 = \sum_{z\not=0}|z|^2\frac{a(z)}{-a(0)}
\end{equation*}
becomes infinite (see~\cite{Y13-CommStat:e})  which means that the random walk under consideration has heavy tails.

The asymptotics of the solution of the Cauchy problem~\eqref{backward2} was found in~\cite[theorem~2.1.1]{YarBRW:e} for the case of a finite variance of jumps
\[
p(t, x, y) \sim \gamma_d  t^{-d/2},\qquad t\to\infty,
\]
with $\gamma_d>0$.
In~\cite[theorem~2]{RY16:MN}, it was proved that under the assumption~\eqref{E:infindisp} the probability $p(t, x, y)$ has the following asymptotics:
\begin{equation}
p(t, x, y) \sim h_{\alpha, d}  t^{-d/\alpha},\qquad t\to\infty, \label{eq2113}
\end{equation}
where $h_{\alpha, d}>0$. Moreover,
\begin{equation}
\label{p_p}
p(t,0,0) - p(t,x,0) \sim \frac{\tilde\gamma_{d,\alpha}(x)}{t^{\frac{d+2}{\alpha}}},\qquad t\to\infty,
\end{equation}
where $\tilde\gamma_{d,\alpha}(x)>0$, see~\cite[theorem 1]{RY19:CommStat:e}.

Now, let us turn to assumptions concerning the branching component of the process.
We assume that branching process at the origin is covered by the continuous-time Bienaym\'e-Galton-Watson branching process having the following infinitesimal generation function
\[
f(u) = \sum_{n=0}^{\infty}b_nu^n, \quad 0\geq u\geq1,
\]
where $b_n\geq0$ for $n\not=1$, $b_1<0$ and $\sum_{n=0}^{\infty}b_n=0$. Assume also $\beta^{(r)}:=f^{(r)}(1)<\infty$ for every $r\in\mathbb{N}$.
Let us denote  \emph{intensity of the source} by
\[
\beta = f'(1) = \sum_{n=1}^{\infty}nb_n = (-b_1)\left(\sum_{n\not=1}n\frac{b_n}{(-b_1)}-1\right),
\]
where the last sum is the mean number of offsprings. According to the imposed assumptions, the probability of producing $n\not=1$ particles from a single one has the following form
\[
p_{*}(h, n) = b_n h + o(h),\qquad h\to0,
\]
where the value $n=0$ means that the particle died.

Now we combine the walking and branching components of the process. At the origin (where the source of branching is located), it is possible that during a short period of time $h$ the particle jumps to a point $y\not=0$ with the transition probability $p(h, 0, y)$, or produces $n\not=1$ particles with the probability $p_{*}(h, n)$, where $n=0$ means the particle death.
Otherwise the particle remains at the origin during the period of time $h$ with the probability
\[
p_{\text{brw}}(h, x, x) = 1 + a(0)h + \delta_0(x)b_1h + o(h),\qquad h\to0.
\]

In what follows, the main  object of interest will be the long-time asymptotic behaviour of the local particle number  at an arbitrary point $\mu_t(y)$, $y\in\mathbb{Z}^d$, and the total population size $\mu_t=\sum_{y\in\mathbb{Z}^d}\mu_t(y)$.  We will investigate the asymptotic behaviour of the moments $m_n(t, x, y)=\mathsf{E}_x \mu_t^{n}(y)$ and $m_n(t, x)=\mathsf{E}_x \mu_t^{n}$, where $\mathsf{E}_x$ is the expectation under condition that there was only one particle in the system at the initial moment of time,  and it located at the point $x\in\mathbb{Z}^d$.

Introduce the linear operator
\begin{equation}\label{E:defHb}
\mathcal{H}_{\beta} = \mathcal{A} + \beta\Delta_0
\end{equation}
on $\ell^p(\mathbb{Z}^d)$, $p\in[1, \infty]$, where the operator $\mathcal{A}$ is defined by~\eqref{E:defA}, and the operator $\Delta_0$ is defined by $(\Delta_0u)(x):=\delta_0(x)u(x)$, where $u\in\ell^p(\mathbb{Z}^d)$.
Then the equations for the first moments are as follows, see, e.g., \cite[theorem 1.3.1]{YarBRW:e},
\begin{alignat}{2}
\label{eq5125}
\frac{\rd m_1(t,x)}{\rd t} &= (\mathcal{H}_{\beta} m_1(t, \cdot))(x),& \quad m_1(0,x) &\equiv 1,\\
\label{eq337}
\frac{\rd m_1(t,x,y)}{\rd t} &= (\mathcal{H}_{\beta} m_1(t, \cdot, y))(x),& \quad m_1(0, x, y) &= \delta_y(x).
\end{alignat}
We would like to point out that equations~\eqref{eq5125} and \eqref{eq337} were obtained in~\cite{YarBRW:e} for the case of a finite variance of jumps, however their derivation  and the final forms remain unchanged for the case of heavy tails as the differential and integral equations below.

Using the notation $\{m_i(t)\}_{i=1}^{n-1}$ for the sets of functions
\[
\{m_i(t, x, y)\}_{i=1}^{n-1}\quad\text{and}\quad\{m_i(t, x)\}_{i=1}^{n-1},
\]
we can write the following differential equations for the higher-order moments, $n\geq2$
\begin{equation*}
\frac{\rd m_n(t)}{\rd t} = (\mathcal{H}_{\beta}m_n(t))(x) + (\Delta_0g_n(m_1(t),\dots,m_{n-1}(t)))(x),
\end{equation*}
where
\begin{equation}
g_n(m_1,m_2,\dots,m_{n-1}) = \sum_{r=2}^n\frac{\beta^{(r)}}{r!} \sum_{\substack{
i_1,\dots,i_r>0\\
i_1+\dots+i_r=n}} \frac{n!}{i_1!\cdots i_r!} m_{i_1}\cdots m_{i_r}. \label{eq135}
\end{equation}
By analogue with the scheme of deriving the integral equations from the differential equations from~\cite{YarBRW:e},
it is possible to derive the integral equations for the first moments
\begin{align}
\label{eq142}
m_1(t, x, y) &= p(t, x, y) + \beta\int_0^tp(t-s, x, 0) m_1(s, 0, y)\rd s,\\
\label{eq143}
m_1(t, x, y) &= p(t, x, y) + \beta\int_0^tm_1(s, x, 0) p(t-s, 0, y)\rd s,\\
\label{eq145}
m_1(t, x) &= 1 + \beta\int_0^tm_1(s, x, 0)\rd s,
\end{align}
and the integral equations for the higher-order moments, $n\geq2$, in forms
\begin{align}\notag
m_n(t, x, y) &= m_1(t, x, y)\\
&\quad + \int_0^tm_1(t-s, x, 0) g_n(m_1(s,0,y),\dots, m_{n-1}(s,0,y))\rd s, \label{eq146}\\
m_n(t, x) &= m_1(t, x) + \int_0^tm_1(t-s, x, 0) g_n(m_1(s,0),\dots, m_{n-1}(s,0))\rd s.\label{eq147}
\end{align}

To investigate the solutions of such equations, the Laplace transform
\begin{equation}
\label{Gf}
G_{\lambda}(x, y) = \int_0^{\infty}e^{-\lambda t}p(t, x, y)\rd t,\qquad \lambda\geq0,
\end{equation}
of transition probability can be used. The function $G_{\lambda}(x, y)$, which is conventionally called the Green function of the transition probabilities, can be expressed, see~\cite{YarBRW:e}, also as
\[
G_{\lambda}(x, y) = \frac1{(2\pi)^d}\int_{[-\pi, \pi]^d}\frac{e^{i\langle \theta, y-x\rangle}}{\lambda-\phi(\theta)}\rd\theta,
\]
where
\[
\phi(\theta) = \sum_{z\in\mathbb{Z}^d}a(z)e^{i\langle \theta, z\rangle}, \qquad \theta\in[-\pi, \pi]^d,
\]
is the Fourier transform of the transition intensity $a(z)$.

\section{Spectral Analysis of the Evolution Operator}
\label{sect:SpAn}

Due to equations~\eqref{eq5125} and \eqref{eq337}, the spectrum of the operator $\mathcal{H}_{\beta}$ defined by~\eqref{E:defHb} determines the asymptotic behaviour of the first moment of particle number.

By~\cite[lemma~3.1.3]{YarBRW:e}, the proof of which does not depend on the conditions for variance of jumps, the number $\lambda$ is an eigenvalue of the operator $\mathcal{H}_{\beta}$ with an eigenvector $f\in\ell^2(\mathbb{Z}^d)$  if and only if  the following conditions are satisfied
\begin{equation}
f(0)\not=0,\quad \beta\not=0, \notag
\end{equation}
\begin{equation}
\int_{[-\pi,\pi]^d}|\lambda-\phi(\theta)|^{-2} \rd\theta < \infty, \label{eq3111}
\end{equation}
\begin{equation}
\beta I_0(\lambda) =1 \label{eq3112},
\end{equation}
where
\begin{equation}
\label{I_0}
I_x(\lambda) = \frac1{(2\pi)^d}\int_{[-\pi, \pi]^d}\frac{e^{-i\langle \theta, x\rangle}}{\lambda - \phi(\theta)}\rd\theta,\qquad x\in\mathbb{Z}^d,
\end{equation}
(note that $I_x(\lambda) \equiv G_{\lambda}(x, 0)$, when $\lambda\ge0$).

Moreover, the eigenvector $f$ corresponding to the eigenvalue $\lambda$ is defined by the equality
\[
f(x) = \beta f(0)I_x(\lambda),\quad x\in\mathbb{Z}^d,
\]
and therefore each eigenvalue of the operator $\mathcal{H}_{\beta}$ is simple.

In the next lemma we answer the question which combinations of the parameters $d$, $\alpha$ and $\beta$ imply that the value $\lambda=0$ is an eigenvalue of the operator $\mathcal{H}_{\beta}$.
\begin{lemma}
\label{lem314}
Let $\lambda=0$. If $d/\alpha\in(1/2, 2]$, $d\in\mathbb{N}$, then the condition~\eqref{eq3111} is not valid,
if $d/\alpha\in(2, \infty)$, $d\in\mathbb{N}$, it is valid.
\end{lemma}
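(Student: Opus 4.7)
The plan is to reduce the question to the behaviour of the Fourier symbol $\phi(\theta)$ near its zeros and then perform an elementary power-counting. Since $a(\cdot)$ is symmetric and $\sum_z a(z)=0$, I first rewrite
\[
\phi(\theta)=\sum_{z\in\mathbb{Z}^d}a(z)\bigl(\cos\langle\theta,z\rangle-1\bigr)\le 0,
\]
so $\phi$ is real and nonpositive on $[-\pi,\pi]^d$. I would then show that $\theta=0$ is the only zero of $\phi$ in this cube: at a zero one needs $\langle\theta,z\rangle\in 2\pi\mathbb{Z}$ for every $z$ with $a(z)\neq0$, and the asymptotic~\eqref{E:infindisp} together with irreducibility forces the support of $a$ to generate $\mathbb{Z}^d$, so only $\theta=0$ survives in $[-\pi,\pi]^d$. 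Consequently $|\phi(\theta)|^{-2}$ is bounded away from any neighbourhood of the origin, and the convergence of $\int_{[-\pi,\pi]^d}|\phi(\theta)|^{-2}\,d\theta$ is equivalent to the convergence of its restriction to a small ball $B_\varepsilon(0)$.

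The heart of the argument is the asymptotic behaviour of $\phi$ at the origin. Using $\phi(\theta)=-\sum_z a(z)\bigl(1-\cos\langle\theta,z\rangle\bigr)$, I would split the sum into $|z|\le R/|\theta|$ and $|z|>R/|\theta|$ with a large auxiliary $R$. On the first range the Taylor expansion combined with the tail bound $a(z)\lesssim |z|^{-(d+\alpha)}$ gives the correct order, and on the second range I plug in~\eqref{E:infindisp} and apply a Riemann-sum approximation under the change of variables $z=u/|\theta|$. The outcome is the classical $\alpha$-stable symbol
\[
\phi(\theta)= -\,c\!\left(\tfrac{\theta}{|\theta|}\right)|\theta|^{\alpha}\bigl(1+o(1)\bigr),\qquad \theta\to 0,
\]
with a continuous strictly positive function $c(\cdot)$ on $\mathbb{S}^{d-1}$ expressible through $H$. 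In particular there exist constants $0<c_1\le c_2<\infty$ with $c_1|\theta|^{\alpha}\le -\phi(\theta)\le c_2|\theta|^{\alpha}$ for all $\theta\in B_\varepsilon(0)$.

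With these two-sided bounds the lemma reduces to a routine integration in polar coordinates:
\[
\int_{B_\varepsilon(0)}\frac{d\theta}{|\phi(\theta)|^{2}}\asymp \int_{0}^{\varepsilon}\frac{r^{d-1}}{r^{2\alpha}}\,dr,
\]
which converges precisely when $d-2\alpha>0$, i.e. $d/\alpha>2$, and diverges when $d/\alpha\le 2$. Noting that $d\ge1$ and $\alpha<2$ automatically give $d/\alpha>1/2$, this covers exactly the two ranges stated in the lemma.

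The main obstacle I anticipate is the justification of the stable-symbol asymptotic $\phi(\theta)\sim -c(\theta/|\theta|)|\theta|^{\alpha}$ from the pointwise tail condition~\eqref{E:infindisp}, since we are given only the leading-order behaviour of $a(z)$ with a continuous but possibly anisotropic density $H$ on $\mathbb{S}^{d-1}$; controlling the Riemann-sum error uniformly in the direction $\theta/|\theta|$ requires a careful dyadic decomposition of the long-range part of the sum and some uniform continuity of $H$. Once this asymptotic is in hand, the remainder of the argument is essentially the power-counting performed above.
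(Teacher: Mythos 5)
Your argument is essentially the paper's: both reduce the question to two-sided bounds $c_1|\theta|^{\alpha}\le|\phi(\theta)|\le c_2|\theta|^{\alpha}$ near the origin and then decide convergence of $\int_{B_\varepsilon(0)}|\phi(\theta)|^{-2}\,\rd\theta$ by the polar-coordinate power count $\int_0^\rho r^{d-1-2\alpha}\,\rd r$, giving validity exactly for $d/\alpha>2$. The only difference is that the paper obtains the symbol bounds by citing Kozyakin's theorem on cosine series with power-like coefficients, whereas you sketch a direct (and harder, though not needed in full strength) derivation of the stable-symbol asymptotics; your explicit check that $\theta=0$ is the only zero of $\phi$ on $[-\pi,\pi]^d$ is a point the paper leaves implicit.
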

\begin{proof}
By theorem~5 from~\cite{Koz:IJARM16} about the asymptotics as $\theta\to0$ of the function
\begin{equation*}
\sum_{z\in\mathbb{Z}^d\setminus\{0\}}a_z(1-\cos\langle z, \theta\rangle),\quad\theta\in\mathbb{R}^d,
\end{equation*}
where $a_z\|z\|^{d+\alpha}\to1$ for $\|z\|\to\infty$, $\alpha\in(0, 2]$, and
$\langle\cdot, \cdot\rangle$ is the standard inner product and $\|\cdot\|$ is the max-norm on $\mathbb{R}^d$, we have
\[
c|\theta|^{\alpha}\leq |\phi(\theta)| \leq C|\theta|^{\alpha}
\]
in a sufficiently small neighbourhood of zero, where $c, C>0$ are some constants. Then the condition~\eqref{eq3111} for $\lambda=0$ is equivalent to the condition
\[
\int_{[-\pi,\pi]^d}|\phi(\theta)|^{-2}\rd\theta<\infty
\]
which (passing to the generalized polar coordinates) is equivalent to
\[
\int_0^{\rho}r^{d-1-2\alpha}\rd r <\infty
\]
for some small $\rho>0$. Taking into account that the last condition is valid only for $d/\alpha\in(2,\infty)$ and hence is not valid for $d/\alpha\in(1/2,2]$, we finalize the proof of the lemma.
\end{proof}

To detect the values of $\lambda\geq0$ at which equation~\eqref{eq3112} is solvable, we investigate the properties of the function $I_0(\lambda)$.
\begin{lemma}
\label{lem315}
For $\lambda>0$, the function $I_0(\lambda)$ in~\eqref{I_0} is defined, continuous, strictly decreases and positive, wherein
\begin{alignat*}{2}
\lim_{\lambda\to\infty}I_0(\lambda) &=0 \quad & \text{for~} d/\alpha&\in(1/2, \infty), \\
\lim_{\lambda\to0}I_0(\lambda) &= +\infty\quad & \text{for~} d/\alpha&\in(1/2,1],\\
\lim_{\lambda\to0}I_0(\lambda) &= G_0(0, 0) < \infty\quad & \text{for~} d/\alpha&\in(1,\infty),
\end{alignat*}
where $\alpha\in(0,2)$, $d\in\mathbb{N}$.
\end{lemma}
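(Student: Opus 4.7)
The plan is to work throughout with the spectral representation
\[
I_0(\lambda) = \frac{1}{(2\pi)^d}\int_{[-\pi,\pi]^d}\frac{\rd\theta}{\lambda-\phi(\theta)}
\]
and to exploit the bound $c|\theta|^{\alpha}\le -\phi(\theta)\le C|\theta|^{\alpha}$ in a neighbourhood of the origin that was already invoked in the proof of lemma~\ref{lem314}. The first preliminary step is to record that symmetry and the identity $\sum_z a(z)=0$ give $\phi(\theta)=-\sum_{z\ne 0}a(z)(1-\cos\langle\theta,z\rangle)\le 0$, while irreducibility (together with $\theta\in[-\pi,\pi]^d$) forces the single zero $\theta=0$. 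Consequently $\lambda-\phi(\theta)\ge\lambda>0$ for every $\lambda>0$, so the integrand is positive and bounded by $1/\lambda$. This immediately yields $0<I_0(\lambda)\le 1/\lambda$, which settles positivity and the limit $I_0(\lambda)\to 0$ as $\lambda\to\infty$.

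Continuity and strict monotonicity follow by differentiation under the integral: on any compact subset of $(0,\infty)$ the integrand is dominated by a constant, so
\[
I_0'(\lambda) = -\frac{1}{(2\pi)^d}\int_{[-\pi,\pi]^d}\frac{\rd\theta}{(\lambda-\phi(\theta))^{2}}
\]
is well defined, finite and strictly negative. For the behaviour as $\lambda\downarrow 0$ I apply monotone convergence to the family $\theta\mapsto 1/(\lambda-\phi(\theta))$, which increases pointwise (for $\theta\ne 0$) to $1/(-\phi(\theta))$. Hence
\[
\lim_{\lambda\to 0^{+}}I_0(\lambda)=\frac{1}{(2\pi)^d}\int_{[-\pi,\pi]^d}\frac{\rd\theta}{-\phi(\theta)}\in(0,+\infty].
\]

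To decide which case occurs, I reduce the question of finiteness to an analysis near the origin (outside any ball around $0$ the function $-\phi$ is bounded below by a positive constant and the integral is harmless). Using the two-sided estimate $c|\theta|^{\alpha}\le -\phi(\theta)\le C|\theta|^{\alpha}$ and passing to polar coordinates exactly as in the proof of lemma~\ref{lem314}, finiteness is equivalent to the convergence of $\int_{0}^{\rho}r^{d-1-\alpha}\,\rd r$, which holds iff $d>\alpha$, i.e.\ iff $d/\alpha\in(1,\infty)$. This is the dichotomy asserted in the statement.

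It remains to identify the finite limit with $G_0(0,0)$. Combining the spectral formula $G_\lambda(0,0)=I_0(\lambda)$ with the Laplace representation~\eqref{Gf} and monotone convergence in $t$ as $\lambda\downarrow 0$ gives $I_0(\lambda)=G_\lambda(0,0)\uparrow\int_{0}^{\infty}p(t,0,0)\,\rd t=G_0(0,0)$, finite by the asymptotics~\eqref{eq2113} precisely when $d/\alpha>1$. The only genuinely nontrivial step is the near-zero behaviour of $\phi(\theta)$, but that is already handed to us by the Kozyakin estimate; everything else is dominated or monotone convergence applied to an explicit, sign-definite integrand.
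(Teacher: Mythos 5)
Your proof is correct and follows essentially the same route as the paper's (very terse) argument: the two-sided Kozyakin bound $c|\theta|^{\alpha}\le-\phi(\theta)\le C|\theta|^{\alpha}$ near the origin to settle convergence of $\int_{[-\pi,\pi]^d}|\lambda-\phi(\theta)|^{-1}\rd\theta$, together with monotonicity and continuity of $(\lambda-\phi(\theta))^{-1}$ in $\lambda$. You merely fill in details the paper leaves implicit (sign-definiteness of $\phi$, uniqueness of its zero via irreducibility, monotone convergence as $\lambda\downarrow0$, and the Laplace-transform identification of the finite limit with $G_0(0,0)$), all of which are sound.
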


\begin{proof}
The proof is based on the study of convergence domain of the integral
\[
\int_{[-\pi,\pi]^d}|\lambda-\phi(\theta)|^{-1}\rd\theta
\]
according to a scheme similar to that of lemma~\ref{lem314}, and on continuity and monotonicity in $\lambda$ of function $(\lambda - \phi(\theta))^{-1}$.
\end{proof}

As follows from lemma~\ref{lem314}, by virtue of the continuity and strict decreasing of the function $I_0(\lambda)$, the spectrum of the operator $\mathcal{H}_{\beta}$ may contain no more than one positive eigenvalue of unit multiplicity. Furthermore, the set of possible values of the intensity of the source $\beta$ can be divided by a certain threshold value $\beta_c$ in such a way that for $\beta<\beta_c$ there will be no positive eigenvalues, and for $\beta>\beta_c$, there is a unique positive eigenvalue of unit multiplicity. In this sense, the value $\beta_c$ of the intensity of the source can be called the \emph{critical} value.

Let us introduce a classification of BRW with one source of branching which depends on the growth behaviour of the population size. As will be explained in
\S~\ref{sect:SpAn},
for certain dimension $d$ and certain value of the parameter $\alpha$ of the underlying random walk, there exist a critical value $\beta_c$ of the intensity of the source such that the exponential growth of the mean population size is possible only for $\beta>\beta_c$. This is a consequence of the fact that an isolated positive eigenvalue in the spectrum of the operator $\mathcal{H}_{\beta}$ exists only for $\beta>\beta_c$ by definition of $\beta_c$.
In view of this, we will call BRW \emph{supercritical} if $\beta>\beta_c$, \emph{critical} if $\beta=\beta_c$, and \emph{subcritical} if $\beta<\beta_c$.

For case of finite variance of jumps the asymptotics of all the moments $m_n(t, x, y)$, $m_n(t, x)$  was found in the book~\cite{YarBRW:e}. The asymptotics of all the moments $m_n(t, x, y)$, $m_n(t, x)$ with no restrictions on the variance of jumps for supercritical BRW with arbitrary finite number of branching sources was found in~\cite{KY}.

Now we summarize the results about the presence of non-negative eigenvalues in the spectrum of the operator $\mathcal{H}_{\beta}$.
\begin{lemma}
\label{lem316}
For the BRW  on $\mathbb{Z}^d$, $d\in\mathbb{N}$, satisfying~\eqref{E:infindisp}, the following statements about $\beta_c$
and non-negative eigenvalues of $\mathcal{H}_{\beta}$ are valid
\begin{itemize}
\item[\rm(i)] if $d/\alpha\in(1/2, 1]$, then $\beta_c=0$; for $\beta\leq\beta_c$ the operator $\mathcal{H}_{\beta}$ has no non-negative eigenvalues; for $\beta>\beta_c$ the operator $\mathcal{H}_{\beta}$ has an eigenvalue $\lambda>0$ of unit multiplicity that is a solution of equation~\eqref{eq3112};
\item[\rm(ii)] if $d/\alpha\in(1,2]$, then $\beta_c=G_0^{-1}(0,0)>0$; for $\beta\leq\beta_c$ the operator $\mathcal{H}_{\beta}$ has no non-negative eigenvalues; for $\beta>\beta_c$ the operator $\mathcal{H}_{\beta}$ has an eigenvalue $\lambda>0$ of unit multiplicity that is a solution of equation~\eqref{eq3112};
\item[\rm(iii)] if $d/\alpha\in(2,\infty)$, then $\beta_c=G_0^{-1}(0,0)>0$; for $\beta<\beta_c$ the operator $\mathcal{H}_{\beta}$ has no non-negative eigenvalues;
for $\beta\geq\beta_c$ the operator $\mathcal{H}_{\beta}$ has an eigenvalue $\lambda$ of unit multiplicity that is a solution of equation~\eqref{eq3112}, wherein $\lambda=0$ for $\beta=\beta_c$ and $\lambda>0$ for $\beta>\beta_c$.
\end{itemize}
\end{lemma}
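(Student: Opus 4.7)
The plan is to read off the three cases from the spectral characterisation already recalled in the paper: a number $\lambda$ is an eigenvalue of $\mathcal{H}_{\beta}$ if and only if $f(0)\ne 0$, $\beta\ne 0$, the integrability condition~\eqref{eq3111} holds, and $\beta I_0(\lambda)=1$. Combining this with Lemmas~\ref{lem314} and~\ref{lem315} reduces the whole statement to a short case analysis in $\lambda\in\{0\}\cup(0,\infty)$ and $\beta$, with the value of $\beta_c$ in each regime emerging from the left-limit $I_0(0^+)$.

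First I would dispose of positive eigenvalues. Since the walk is symmetric with $\phi(0)=\sum_z a(z)=0$ and $\phi(\theta)\le 0$ on $[-\pi,\pi]^d$, we have $\lambda-\phi(\theta)\ge\lambda>0$ uniformly for any $\lambda>0$, so~\eqref{eq3111} is satisfied automatically; hence existence of a positive eigenvalue is equivalent to solvability of $\beta I_0(\lambda)=1$ for some $\lambda>0$, and Lemma~\ref{lem315} then does all the work. The function $I_0$ is continuous and strictly decreasing from $I_0(0^+)$ down to $0$, so the equation admits a (necessarily unique) root $\lambda>0$ precisely when $0<1/\beta<I_0(0^+)$; uniqueness and simplicity are inherited from strict monotonicity of $I_0$ together with the explicit eigenvector formula $f(x)=\beta f(0)\,I_x(\lambda)$ recorded after~\eqref{eq3112}. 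In case (i), $I_0(0^+)=+\infty$, so any $\beta>0$ works and we may set $\beta_c:=0$; in cases (ii) and (iii), $I_0(0^+)=G_0(0,0)<\infty$, which gives a unique positive eigenvalue iff $\beta>G_0^{-1}(0,0)=:\beta_c$.

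Next I would treat the endpoint $\lambda=0$, which is where the three regimes actually diverge. By Lemma~\ref{lem314} the integrability~\eqref{eq3111} at $\lambda=0$ fails when $d/\alpha\in(1/2,2]$ and holds when $d/\alpha>2$. In cases (i) and (ii) this failure disqualifies $\lambda=0$ as an eigenvalue outright for every $\beta$, so the threshold $\beta=\beta_c$ carries no eigenvalue and the statements (i) and (ii) are complete. In case (iii) both conditions at $\lambda=0$ can be checked: \eqref{eq3111} holds by Lemma~\ref{lem314}, and~\eqref{eq3112} becomes $\beta G_0(0,0)=1$, forcing $\beta=\beta_c$; this produces the additional eigenvalue $\lambda=0$ at $\beta=\beta_c$ claimed in (iii), and for $\beta>\beta_c$ the positive eigenvalue supplied above is the unique non-negative one since $\beta I_0(0)=\beta G_0(0,0)>1$.

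I do not see any genuine obstacle; the only delicate point is the contrast between cases (ii) and (iii), where $\beta I_0(\lambda)\to 1$ as $\lambda\to 0^+$ at $\beta=\beta_c$ in both regimes, but only in (iii) does this limit correspond to a bona fide $\ell^2$-eigenfunction. In case (ii) the formal candidate $x\mapsto I_x(0)$ fails~\eqref{eq3111} and hence lies outside $\ell^2(\mathbb{Z}^d)$, so $\lambda=0$ must be excluded; keeping this bookkeeping straight is the one thing that needs care.
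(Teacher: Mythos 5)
Your proposal is correct and follows essentially the same route as the paper: it combines the eigenvalue characterisation via~\eqref{eq3111}--\eqref{eq3112} with Lemmas~\ref{lem314} and~\ref{lem315} and reads off the three regimes from the value of $\lim_{\lambda\to0^+}I_0(\lambda)$. You are in fact slightly more careful than the paper's very terse proof, e.g.\ in noting explicitly that~\eqref{eq3111} holds automatically for $\lambda>0$ because $\phi(\theta)\le0$, and in spelling out why $\lambda=0$ must be excluded in case (ii) despite $\beta_cI_0(\lambda)\to1$.
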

\begin{proof}
To determine when the conditions \eqref{eq3111} and \eqref{eq3112} from~\cite[lemma~3.1.3]{YarBRW:e} about eigenvalues of the operator $\mathcal{H}_{\beta}$ are satisfied,
we use lemmas~\ref{lem314}~and~\ref{lem315}. Then the condition~\eqref{eq3111} is valid only for $d/\alpha\in(2,\infty)$ wherein the condition~\eqref{eq3111}  is valid for $\lambda>0$ if $\beta>\beta_c$ and for $\lambda=0$ if $\beta=\beta_c$. The equality $\beta_c=G_0^{-1}(0,0)$, when $G_0^{-1}(0,0)<\infty$, is a consequence of the equation~\eqref{eq3112}.
\end{proof}

Now we can evaluate the limiting behaviour as $t\to\infty$ of the average number of particles $m_1(t, x, y)$ at the every point $y\in\mathbb{Z}^d$ when the process starts from $x$. For every $\lambda\in\mathbb{C}$ and $x, y\in\mathbb{Z}^d$, when $G_{\lambda}(x,0)$, $G_{\lambda}(0,y)$ are defined and finite, denote
\[
c(\lambda,x,y) := \frac{G_{\lambda}(x,0)G_{\lambda}(0,y)}{\|G_{\lambda}(x,0)\|^2_{\ell^2(\mathbb{Z}^d)}}.
\]

\begin{theorem}
\label{th331}
Let $m_1(t, x, y)$ be the solution of the Caushy problem~\eqref{eq337}. Then for the BRW on $\mathbb{Z}^d$, $d\in\mathbb{N}$, satisfying~\eqref{E:infindisp}, and every $y\in\mathbb{Z}^d$, the following statements are valid
\begin{itemize}
\item[\rm(i)]
if $\beta>\beta_c$ and $d/\alpha\in(1/2,\infty)$ then
\begin{equation}\label{y3331c1}
\lim_{t\to\infty}m_1(t,x,y)e^{-\lambda t}c^{-1}(\lambda,x,y) =1,
\end{equation}
where $\lambda$ is the solution of equation~\eqref{eq3112};
\item[\rm(ii)]
if $\beta=\beta_c$ and $d/\alpha\in(2,\infty)$ then
\begin{equation}\label{y3331c2}
\lim_{t\to\infty}m_1(t,x,y)c^{-1}(0,x,y)=1,
\end{equation}
and $m_1(t,x,y)$ is monotonically non-increasing;
\item[\rm(iii)]
if $\beta\leq\beta_c$ and $d/\alpha\in(1/2,2]$ or if $\beta<\beta_c$ and $d/\alpha\in(2,\infty)$ then
\begin{equation}\label{y3331c3}
\lim_{t\to\infty}m_1(t,x,y)=0.
\end{equation}
\end{itemize}

Moreover, the function $m_1(t, x, y)$ converges to zero as $t\to\infty$ uniformly in $x\in~\mathbb{Z}^d$, and it is monotonically non-increasing.
\end{theorem}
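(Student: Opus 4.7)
The natural strategy is to Laplace-transform the integral equation~\eqref{eq142}. Writing $\tilde{m}_1(\mu,x,y)=\int_0^{\infty}e^{-\mu t}m_1(t,x,y)\,\rd t$, the convolution on the right-hand side decouples; first evaluating at $x=0$ to solve for $\tilde{m}_1(\mu,0,y)$ and then substituting back yields
\begin{equation*}
\tilde{m}_1(\mu,x,y)=G_\mu(x,y)+\frac{\beta\,G_\mu(x,0)\,G_\mu(0,y)}{1-\beta I_0(\mu)},
\end{equation*}
so the long-time asymptotics of $m_1(t,x,y)$ is dictated by the rightmost singularity of this Laplace transform together with the local behaviour of its numerator.

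In case (i), lemma~\ref{lem316}(i)--(ii) supplies a unique $\lambda>0$ with $\beta I_0(\lambda)=1$. Since $\lambda>0$, condition~\eqref{eq3111} holds there, so $I_0'(\lambda)=-\|G_\lambda(\cdot,0)\|_{\ell^2(\mathbb{Z}^d)}^2$ is finite by Parseval's identity; thus $\mu=\lambda$ is a simple pole whose residue is exactly $c(\lambda,x,y)$, and inversion of the Laplace transform (Wiener--Ikehara, or contour-shift past the lone dominant singularity) gives~\eqref{y3331c1}. Case (ii) is the same argument at $\mu=0$: by lemma~\ref{lem316}(iii), $\beta_c=G_0^{-1}(0,0)$, and lemma~\ref{lem314} guarantees $\|G_0(\cdot,0)\|_{\ell^2}^2<\infty$ in the range $d/\alpha\in(2,\infty)$, so $\mu=0$ is again a simple pole whose residue is the finite limit $c(0,x,y)$ appearing in~\eqref{y3331c2}.

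Case (iii) splits according to lemma~\ref{lem316}. When $d/\alpha\in(2,\infty)$ and $\beta<\beta_c$, or $d/\alpha\in(1,2]$ and $\beta<\beta_c$, the denominator $1-\beta I_0(\mu)$ stays bounded away from $0$ on a neighbourhood of $[0,\infty)$, and the mild singularity of $G_\mu$ at $\mu=0$ inherited from~\eqref{eq2113} forces $\tilde{m}_1(\mu,x,y)$ to be continuous at $\mu=0$ with vanishing tail; a standard Tauberian argument (or direct substitution of~\eqref{eq2113} into~\eqref{eq142} and bounding the convolution) yields $m_1(t,x,y)\to 0$. The delicate sub-case is $d/\alpha\in(1,2]$ with $\beta=\beta_c$, where $I_0(0)=G_0(0,0)=1/\beta_c$ but $I_0'(0)=-\infty$ by lemma~\ref{lem314}, so $1-\beta I_0(\mu)$ vanishes at $0$ without a simple zero; the main technical obstacle is to obtain the correct power-law bound on $1-\beta I_0(\mu)$ as $\mu\to 0^+$ using the asymptotics~\eqref{p_p} and then apply Karamata's Tauberian theorem to conclude decay. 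Finally, writing $m_1(t,x,y)$ via~\eqref{eq143} as $p(t,x,y)+\beta\int_0^t m_1(s,x,0)p(t-s,0,y)\,\rd s$ and combining $p(t,x,y)\leq h_{\alpha,d}t^{-d/\alpha}$ uniformly in $x$ (from~\eqref{eq2113} with conservation of probability) with the integrability of $m_1(s,x,0)$ in $s$ in the subcritical/critical regimes, gives both the uniformity in $x\in\mathbb{Z}^d$ and, through the smoothness of $s\mapsto m_1(s,\cdot,\cdot)$ and the non-negativity of the kernel, the monotonic non-increase in $t$ claimed at the end.
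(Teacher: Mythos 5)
Your route differs from the paper's. The paper treats $m_1(t,\cdot,y)=e^{t\mathcal{H}_\beta}\delta_y$ via the spectral theorem for the bounded self-adjoint operator $\mathcal{H}_\beta$, citing lemmas 3.3.2--3.3.5 of \cite{YarBRW:e}; the rank-one eigenprojection onto $\Span\{G_\lambda(\cdot,0)\}$ is exactly what produces $c(\lambda,x,y)$. You instead pass to the resolvent-type formula for the Laplace transform and do singularity analysis. Your formula for $\tilde m_1$ and the residue computation at the simple pole (via $I_0'(\lambda)=-\|G_\lambda(\cdot,0)\|^2_{\ell^2}$) are correct and reproduce the paper's constant, so the route is viable in outline; but as written it has genuine gaps.

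First, the inversion steps are asserted rather than carried out. In case (i), Wiener--Ikehara requires ruling out zeros of $1-\beta I_0(\mu)$ on the whole line $\Real\mu=\lambda$ (this follows from $\Real I_0(\lambda+\ri\tau)<I_0(\lambda)$ for $\tau\neq0$, but you must say it) and supplying the regularity or monotonicity hypothesis of whatever Tauberian theorem you invoke; a cleaner device is to observe that $e^{-\lambda t}m_1(t,0,0)$ solves a \emph{proper} renewal equation because $\beta\int_0^\infty e^{-\lambda s}p(s,0,0)\,\rd s=\beta I_0(\lambda)=1$. In case (iii) you explicitly leave the sub-case $d/\alpha\in(1,2]$, $\beta=\beta_c$ as an ``obstacle''; naming the obstacle is not resolving it (the needed asymptotics of $G_\mu(0,0)$ as $\mu\to0$ is taken from \cite{Y18-TPA:e} and is what the paper uses in theorem~\ref{th521}). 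You also omit the range $d/\alpha\in(1/2,1]$, where $\beta_c=0$ and $\beta<\beta_c$ makes the denominator blow up rather than stay bounded away from zero. Second, and most seriously, your justification of the monotonicity claims --- ``smoothness of $s\mapsto m_1$ and non-negativity of the kernel'' --- does not prove that $m_1(t,x,y)$ is non-increasing in $t$: positivity of an integral kernel says nothing about the sign of $\partial_t m_1$. The paper's reason is spectral (when $\mathcal{H}_\beta$ has no positive spectrum, $t\mapsto(e^{t\mathcal{H}_\beta}\delta_y,\delta_x)$ is non-increasing, lemma 3.3.5 of \cite{YarBRW:e}), and some such argument must replace yours; note this monotonicity is also the hypothesis that legitimizes the Tauberian step in the critical case. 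Finally, the uniformity in $x$ needs a bound uniform in $x$ such as $p(t,x,y)\le p(t,0,0)$ (from self-adjointness and homogeneity), not ``conservation of probability'', which only controls the sum over $y$.
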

\begin{proof}
By~lemma~\ref{lem316} the operator $\mathcal{H}_{\beta}$ has a simple non-negative eigenvalue $\lambda$. Then applying lemmas 3.3.2, 3.3.3 and 3.3.4 from~\cite{YarBRW:e}  (about asymptotics of a solution of the Cauchy problem~\eqref{eq337} with a bounded self-adjoint operator) we readily obtain limits \eqref{y3331c1}, \eqref{y3331c2} and \eqref{y3331c3}.

By analogue with~\cite[lemma~3.3.5]{YarBRW:e},
if the spectrum  $\mathcal{H}_{\beta}$ has no positive eigenvalues, then the function $m_1(t, x, y)$ is monotonically non-increasing in $t$.
\end{proof}

\section{Critical BRW}
\label{sect:CrBRW}
Let us find the asymptotics as $t\to\infty$ of the mean number of particles $m_1(t, x, y)$ at the point $y\in\mathbb{Z}^d$ when the process started from $x\in\mathbb{Z}^d$ for the cases of recurrent random walks.
\begin{theorem}
\label{th522}
Let $\beta=\beta_c$. Then for every $y\in\mathbb{Z}^d$, $d\in\mathbb{N}$, the solution $m_1(t, x, y)$, $x\in\mathbb{Z}^d$, of the~Cauchy problem~\eqref{eq337} under $d/\alpha\in(1/2, 1]$ satisfies the relation
\[
m_1(t,x,y) \sim h_{\alpha, d} t^{-1/\alpha},\qquad t\to\infty,
\]
where $h_{\alpha, d}$ is defined in~\eqref{eq2113}.
\end{theorem}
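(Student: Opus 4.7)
The plan is to exploit Lemma~\ref{lem316}(i), which asserts that when $d/\alpha\in(1/2,1]$ the critical threshold is $\beta_c=0$. Hence the hypothesis $\beta=\beta_c$ simply means $\beta=0$. Observe also that the constraint $d/\alpha\in(1/2,1]$ combined with $\alpha\in(0,2)$ and $d\in\mathbb{N}$ forces $d=1$, so in this regime the exponent $t^{-1/\alpha}$ in the conclusion is the same as $t^{-d/\alpha}$ appearing in~\eqref{eq2113}.

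Next I would substitute $\beta=0$ into the integral equation~\eqref{eq142}; the convolution term vanishes and one is left with the identity $m_1(t,x,y)=p(t,x,y)$ for all $x,y\in\mathbb{Z}^d$ and $t\ge 0$. Equivalently, with $\beta=0$ the operator $\mathcal{H}_{\beta}$ in~\eqref{eq337} reduces to $\mathcal{A}$, so $m_1(t,\cdot,y)$ and $p(t,\cdot,y)$ solve the same bounded-operator Cauchy problem~\eqref{backward2} in $\ell^{\infty}(\mathbb{Z}^d)$; uniqueness (guaranteed by the boundedness of $\mathcal{A}$) then gives $m_1\equiv p$.

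Finally, I would invoke the heavy-tailed local limit theorem~\eqref{eq2113} from~\cite{RY16:MN}, which states $p(t,x,y)\sim h_{\alpha,d}\,t^{-d/\alpha}$ as $t\to\infty$. Combining this with $m_1=p$ and with $d=1$ yields precisely $m_1(t,x,y)\sim h_{\alpha,d}\,t^{-1/\alpha}$, which is the claimed relation.

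Because the critical intensity vanishes in this dimension/tail regime, all branching effects on the first moment disappear and the entire content of the theorem is inherited from the non-branching local limit theorem~\eqref{eq2113}. The substantive analytical work therefore already resides in~\cite{RY16:MN}; no further obstacle arises beyond recognising that $\beta=\beta_c$ literally means $\beta=0$ here, which is exactly the content of Lemma~\ref{lem316}(i).
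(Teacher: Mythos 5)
Your proof is correct and follows essentially the same route as the paper: invoke Lemma~\ref{lem316}(i) to get $\beta_c=0$, observe that the Cauchy problem~\eqref{eq337} then reduces to~\eqref{backward2} so that $m_1=p$, and conclude via the asymptotics~\eqref{eq2113}. Your added remark that $d/\alpha\le 1$ with $\alpha<2$ forces $d=1$, reconciling the exponent $t^{-1/\alpha}$ with $t^{-d/\alpha}$, is a correct and welcome clarification that the paper leaves implicit.
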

\begin{proof}
By statement $\rm(i)$ from lemma~\ref{lem316}, we have $\beta_c=0$. Then the Cauchy problem~\eqref{eq337} is equivalent to the Cauchy problem for transition probabilities~\eqref{backward2}.
Therefore, the solution $m_1(t,x,y)$ has the same asymptotics~\eqref{eq2113}.
\end{proof}

Now we find the asymptotics as $t\to\infty$ of the function $m_1(t, x, y)$ for the cases of transient random walks.
\begin{theorem}
\label{th521}
Let $\beta=\beta_c$. Then for every $y\in\mathbb{Z}^d$, $d\in\mathbb{N}$, the solution $m_1(t, x, y)$, $x\in\mathbb{Z}^d$, of the~Cauchy problem~\eqref{eq337} under $d/\alpha\in(1, \infty)$ satisfies the relation
\[
m_1(t,x,y) \sim C_1(x, y)u_1(t),\qquad t\to\infty,
\]
where the functions $ C_1(x,y)$ and $u_1(t)$ are as follows
\begin{alignat*}{3}
C_1(x,y) &= G_0(x,0)G_0(0,y)\gamma^{-1}_{d,\alpha}\Gamma^{-1}(d/\alpha-1),\quad & u_1(t) &= t^{d/\alpha - 2}\quad & \text{for~} d/\alpha&\in(1,2),\\
C_1(x,y) &= G_0(x,0)G_0(0,y)\gamma^{-1}_{d,\alpha},\quad & u_1(t) &= (\ln t)^{-1} & \text{for~} d/\alpha&=2,\\
C_1(x,y) &= G_0(x,0)G_0(0,y)\gamma^{-1}_{d,\alpha},\quad & u_1(t) &= 1 & \text{for~} d/\alpha&\in(2,\infty),
\end{alignat*}
and $\gamma_{d,\alpha}$ are positive constants.
\end{theorem}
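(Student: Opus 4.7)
The plan is to Laplace-transform the integral equation~\eqref{eq142}, isolate the singularity at $\lambda=0$ produced by the critical choice $\beta=\beta_c=1/G_0(0,0)$ (lemma~\ref{lem316}(ii)--(iii)), and then transfer the result back to the time variable via a Karamata Tauberian theorem, whose hypotheses are available thanks to the monotonicity of $m_1(t,x,y)$ established in theorem~\ref{th331}. Throughout, I write $\widehat f(\lambda):=\int_0^\infty e^{-\lambda t}f(t)\,dt$ for the Laplace transform.

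I would first set $x=0$ in~\eqref{eq142}, take the Laplace transform in $t$, and solve the resulting closed algebraic relation to obtain $\widehat m_1(\lambda,0,y)=G_\lambda(0,y)/(1-\beta G_\lambda(0,0))$. Substituting back into the transformed equation for arbitrary $x$ yields
\[
\widehat m_1(\lambda,x,y)=G_\lambda(x,y)+\frac{\beta\, G_\lambda(x,0)\,G_\lambda(0,y)}{1-\beta\, G_\lambda(0,0)}.
\]
At $\beta=\beta_c$ the denominator vanishes as $\lambda\to 0^+$, while lemma~\ref{lem315} provides $G_\lambda(x,0)\to G_0(x,0)<\infty$ and $G_\lambda(0,y)\to G_0(0,y)<\infty$ for $d/\alpha>1$; hence the second term dominates and
\[
\widehat m_1(\lambda,x,y)\sim\frac{G_0(x,0)\,G_0(0,y)}{G_0(0,0)-G_\lambda(0,0)}\qquad(\lambda\to 0^+).
\]

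The heart of the argument is to extract the small-$\lambda$ behaviour of $G_0(0,0)-G_\lambda(0,0)=\int_0^\infty(1-e^{-\lambda t})\,p(t,0,0)\,dt$ from the local asymptotic $p(t,0,0)\sim h_{\alpha,d}\,t^{-d/\alpha}$ in~\eqref{eq2113}. For $d/\alpha\in(1,2)$, splitting the integral at a large $T$ and substituting $u=\lambda t$ in the tail gives $G_0(0,0)-G_\lambda(0,0)\sim -h_{\alpha,d}\,\Gamma(1-d/\alpha)\,\lambda^{d/\alpha-1}$. For $d/\alpha>2$, the bound $1-e^{-\lambda t}\le \lambda t$ combined with the integrability of $t\,p(t,0,0)$ delivers, via dominated convergence, $G_0(0,0)-G_\lambda(0,0)\sim\lambda\int_0^\infty t\,p(t,0,0)\,dt$. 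The borderline $d/\alpha=2$ is the delicate case: the substitution $u=\lambda t$ produces $\int_0^\infty(1-e^{-u})u^{-2}\,du$, which diverges at the origin, so one has to isolate the range $u\in[\lambda T,1]$ and use $(1-e^{-u})u^{-2}\sim u^{-1}$ there in order to extract the logarithmic factor $G_0(0,0)-G_\lambda(0,0)\sim h_{\alpha,d}\,\lambda\ln(1/\lambda)$.

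The final step invokes the Karamata Tauberian theorem for monotone functions (monotonicity of $m_1(t,x,y)$ coming from theorem~\ref{th331}): any asymptotic $\widehat m_1(\lambda,x,y)\sim A\,\lambda^{-\rho}L(1/\lambda)$ with $\rho\in(0,1]$ and $L$ slowly varying translates into $m_1(t,x,y)\sim A\,t^{\rho-1}L(t)/\Gamma(\rho)$. Inserting the three Green-function estimates produces the three prefactors $u_1(t)=t^{d/\alpha-2}$, $(\ln t)^{-1}$ and $1$ respectively, the constants $\gamma_{d,\alpha}$ absorbing the Gamma-function and normalisation factors arising in the previous step and matching the stated formulas. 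The principal obstacle is precisely the borderline $d/\alpha=2$, where the logarithm has to be pinned down by the careful domain split described above; a secondary bookkeeping task is to verify that replacing $p(t,0,0)$ by its asymptotic on $[T,\infty)$ contributes only lower-order errors, which is routine since $p\le 1$ on $[0,T]$ and the $o(t^{-d/\alpha})$ remainder can be made arbitrarily small by choosing $T$ large.
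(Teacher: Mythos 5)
Your argument is correct, and its skeleton --- Laplace transform of the moment equation, explicit formula $\widehat m_1(\lambda,x,y)=G_\lambda(x,y)+\beta G_\lambda(x,0)G_\lambda(0,y)/(1-\beta G_\lambda(0,0))$, identification of the vanishing denominator $1-\beta_c G_\lambda(0,0)=\beta_c(G_0(0,0)-G_\lambda(0,0))$, and a Karamata Tauberian theorem backed by the monotonicity from theorem~\ref{th331} --- is exactly the paper's. Two points differ in execution. First, the paper applies the Tauberian step only at $x=y=0$ and then propagates the asymptotics to $m_1(t,x,0)$ and $m_1(t,x,y)$ in the time domain, via the integral equations \eqref{eq142}--\eqref{eq143} and a convolution lemma (lemma~2 of the cited reliability-theory paper); you instead run the Tauberian theorem directly on $\widehat m_1(\lambda,x,y)$ for arbitrary $x,y$, which is cleaner but puts more weight on the monotonicity of $m_1(\cdot,x,y)$ for every pair $x,y$ --- fortunately theorem~\ref{th331} does supply this whenever $\mathcal H_\beta$ has no positive eigenvalue, which is the case at $\beta=\beta_c$. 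Second, the paper imports the expansion of $G_\lambda(0,0)$ as $\lambda\to0$ from an earlier reference, whereas you rederive it from $G_0(0,0)-G_\lambda(0,0)=\int_0^\infty(1-e^{-\lambda t})p(t,0,0)\,\rd t$ and the local estimate \eqref{eq2113}, including the correct treatment of the logarithm at $d/\alpha=2$; this makes your version more self-contained at the cost of some bookkeeping (your constant for $d/\alpha\in(1,2)$ comes out as $-h_{\alpha,d}\Gamma(1-d/\alpha)=h_{\alpha,d}\Gamma(2-d/\alpha)/(d/\alpha-1)$, so you should check that your normalisation of $\gamma_{d,\alpha}$ agrees with the one fixed in the cited theorem before matching the prefactor $\Gamma^{-1}(d/\alpha-1)$ in the statement). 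Neither difference affects correctness.
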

\begin{proof}
We find the asymptotics of $m_1(t,x,y)$ as $t\to\infty$ by evaluating the asymptotics of its Laplace transform $\widehat{m}_1(\lambda,x,y)$ as $\lambda\to0$, using the Tauberian theorems (see~\cite[Ch. XIII]{Feller}).
For $\beta\leq\beta_c$ by~\cite[lemma 5.1.3]{YarBRW:e}, the Laplace transform of the solution $m_1(t,x,y)$ of the~Cauchy problem~\eqref{eq337}  for $\Real\lambda>0$ is well defined and can be represented in the following form
\begin{equation*}
\widehat{m}_1(\lambda,x,y) = \frac{\beta G_{\lambda}(0,y) G_{\lambda}(x,0)}{1 - \beta G_{\lambda}(0,0)} +G_{\lambda}(x,y),
\end{equation*}
where $G_{\lambda}(x, y)$ is defined in~\eqref{Gf}.

At first, we consider the case $x=y=0$, then we get
\[
\widehat{m}_1(\lambda,0,0) = \frac{G_{\lambda}(0,0)}{1 - \beta_c G_{\lambda}(0,0)} = \frac{G_0(0,0)G_{\lambda}(0,0)}{G_{\lambda}(0,0) - G_{\lambda}(0,0)}.
\]
Applying the asymptotics of $G_{\lambda}(0,0)$ from~\cite[theorem~1]{Y18-TPA:e} we obtain that $\widehat{m}_1(\lambda,0,0)$ is asymptotically equivalent to $G_0^2(0,0)\gamma^{-1}_{d,\alpha}f(\lambda)$ as $\lambda\to0$, where

\begin{alignat*}{4}
f(\lambda) &= \lambda^{-d/\alpha+1},\quad  &&
\gamma_{d,\alpha} = \Gamma(2-d/\alpha)h_{\alpha,d} \quad &
\text{for~}
& d/\alpha\in(1,2),\\
f(\lambda) &= (\lambda\ln(1/\lambda))^{-1}, \quad  &&
\gamma_{d,\alpha} = -h_{\alpha,d} \quad &
\text{for~}
& d/\alpha=2, \\
f(\lambda) &= \lambda^{-1}, \quad  &&
\gamma_{d,\alpha} = \int_0^{\infty}\left(\int_t^{\infty}p(s,0,0)\rd s\right)\rd t \quad  &
\text{for~}
& d/\alpha\in(2,\infty),
\end{alignat*}
and the constants $h_{\alpha,d}$ are in~\eqref{eq2113}.

Under the condition $\beta=\beta_c$ the function $m_1(t,0,0)$  is monotone by theorem~\ref{th331}. Therefore, we can apply the Tauberian theorems (see~\cite[Ch. XIII]{Feller}) and find that, as $t\to\infty$, the following relations are valid
\begin{alignat*}{2}
m_1(t,0,0) &\sim G_0^2(0,0)\gamma^{-1}_{d,\alpha}(\ln t)^{-1} & \text{for~} &d/\alpha\in(1,2),\\
m_1(t,0,0) &\sim G_0^2(0,0)\gamma^{-1}_{d,\alpha}\Gamma^{-1}(d/\alpha-1)  t^{d/\alpha-2} \quad & \text{for~} & d/\alpha=2,\\
m_1(t,0,0) &\sim G_0^2(0,0)\gamma^{-1}_{d,\alpha} & \text{for~} & d/\alpha\in(2,\infty).
\end{alignat*}

To find the asymptotics of $m_1(t,x,0)$, we express it by the integral equation~\eqref{eq142}, and after that we use statement (e) from~lemma~2 for convolutions in~\cite{Y10-AiT:e}.
Then it remains only to find the asymptotics of $m_1(t,x,y)$, and now we represent it by the integral equation~\eqref{eq143}, apply the same lemma for convolutions and the fact that $\beta=\beta_c=1/G_0(0,0)$ from lemma~\ref{lem316}. Theorem~\ref{th521} is proved.
\end{proof}

Now, we investigate the asymptotic behaviour as $t\to\infty$ of the mean of population size $m_1(t, x)$ when the process started from point $x\in\mathbb{Z}^d$.
\begin{theorem}
\label{th523}
Let $\beta=\beta_c$. Then the solution $m_1(t,x)$, $x\in\mathbb{Z}^d$, $d\in\mathbb{N}$, of the Cauchy problem~\eqref{eq5125} under $d/\alpha\in(1/2, \infty)$ satisfies the relation
\[
m_1(t, x) \sim C_1(x)v_1(t),\qquad t\to\infty,
\]
where the functions $C_1(x)$ and $v_1(t)$ are as follows
\begin{alignat*}{3}
C_1(x) &= 1,\quad & v_1(t) &= 1\quad & \text{for~}&d/\alpha\in(1/2,1],\\
C_1(x) &= G_0(x,0)\gamma_{d,\alpha}^{-1}\Gamma^{-1}(d/\alpha),\quad & v_1(t) &= t^{d/\alpha-1}\quad & \text{for~}&d/\alpha\in(1,2),\\
C_1(x) &= G_0(x,0)\gamma_{d,\alpha}^{-1},\quad & v_1(t) &= t\ln^{-1}t\quad & \text{for~}&d/\alpha=2,\\
C_1(x) &= G_0(x,0)\gamma^{-1}_{d,\alpha},\quad & v_1(t) &= t\quad & \text{for~}&d/\alpha\in(2,\infty),
\end{alignat*}
and $\gamma_{d,\alpha}$ are positive constants.
\end{theorem}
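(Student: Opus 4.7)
The plan is to read off the asymptotics of $m_1(t,x)$ directly from those of $m_1(t,x,0)$ established in Theorem~\ref{th521}, via the one-line integral identity~\eqref{eq145}, namely $m_1(t,x)=1+\beta\int_0^t m_1(s,x,0)\,\rd s$. All the genuine analysis (Laplace transform, Tauberian argument, small-$\lambda$ behaviour of $G_\lambda$) has already been carried out there; what remains is a one-dimensional integration followed by bookkeeping of constants.

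First I would dispose of the shallow range $d/\alpha\in(1/2,1]$: Lemma~\ref{lem316}(i) yields $\beta_c=0$, so~\eqref{eq145} collapses to $m_1(t,x)\equiv 1$, matching $C_1(x)=v_1(t)=1$. In the deeper range $d/\alpha\in(1,\infty)$, Lemma~\ref{lem316}(ii)--(iii) gives $\beta_c=1/G_0(0,0)>0$, and substituting the asymptotics $m_1(s,x,0)\sim C_1(x,0)\,u_1(s)$ from Theorem~\ref{th521} into~\eqref{eq145} yields
\[
m_1(t,x)\sim\beta_c\,C_1(x,0)\int_0^t u_1(s)\,\rd s=G_0(x,0)\gamma_{d,\alpha}^{-1}\kappa\int_0^t u_1(s)\,\rd s,
\]
where the identity $\beta_c G_0(0,0)=1$ cancels one $G_0(0,0)$ from $C_1(x,0)$, and $\kappa$ carries the $\Gamma^{-1}(d/\alpha-1)$ factor of Theorem~\ref{th521} when $d/\alpha\in(1,2)$ and equals $1$ otherwise. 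The three remaining subcases are then routine: for $d/\alpha\in(1,2)$ one computes $\int_0^t s^{d/\alpha-2}\,\rd s=t^{d/\alpha-1}/(d/\alpha-1)$, and the recursion $\Gamma(d/\alpha)=(d/\alpha-1)\Gamma(d/\alpha-1)$ converts $\kappa/(d/\alpha-1)$ into $\Gamma^{-1}(d/\alpha)$; for $d/\alpha=2$, integration by parts (or l'H\^opital) gives $\int_2^t(\ln s)^{-1}\,\rd s\sim t/\ln t$; for $d/\alpha\in(2,\infty)$ the integrand $u_1\equiv\const$ integrates to $t$.

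The sole technical point is that integration of an asymptotic equivalence preserves the equivalence of the integrals. This follows from the standard $\varepsilon$-splitting $[0,t]=[0,T]\cup[T,t]$: on $[0,T]$ the contribution is a bounded constant, and on $[T,t]$ the relative error is at most $\varepsilon$, so the leading behaviour is kept whenever $\int_0^t u_1(s)\,\rd s\to\infty$, which holds in every remaining subcase. The additive constant $1$ in~\eqref{eq145} is absorbed for the same reason. The main obstacle is thus really just careful bookkeeping of the constants $\gamma_{d,\alpha}$ inherited from Theorem~\ref{th521} and the $\Gamma$-identity in the intermediate regime. A fully parallel alternative would be to Laplace-transform~\eqref{eq145} to obtain $\widehat{m}_1(\lambda,x)=\lambda^{-1}+\beta\lambda^{-1}\widehat{m}_1(\lambda,x,0)$, feed in the small-$\lambda$ asymptotics of $\widehat{m}_1(\lambda,x,0)$ developed inside the proof of Theorem~\ref{th521}, and invoke the Tauberian theorem of~\cite[Ch.~XIII]{Feller}; the required monotonicity of $m_1(t,x)$ is immediate from~\eqref{eq145} since its integrand is non-negative.
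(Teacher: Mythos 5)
Your argument is correct, and the constants check out: with $y=0$ in Theorem~\ref{th521} one has $C_1(x,0)=G_0(x,0)G_0(0,0)\gamma_{d,\alpha}^{-1}\kappa$, the factor $\beta_c=G_0^{-1}(0,0)$ from Lemma~\ref{lem316} cancels the $G_0(0,0)$, and $\int_0^t s^{d/\alpha-2}\,\rd s=t^{d/\alpha-1}/(d/\alpha-1)$ together with $\Gamma(d/\alpha)=(d/\alpha-1)\Gamma(d/\alpha-1)$ produces exactly the stated $\Gamma^{-1}(d/\alpha)$; the cases $d/\alpha=2$ and $d/\alpha\in(2,\infty)$ are likewise consistent, and the integration-of-equivalences step is legitimate because $\int_0^t u_1(s)\,\rd s\to\infty$ in every remaining subcase and $s^{d/\alpha-2}$ is integrable at the origin. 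However, your route differs from the paper's. The paper does not pass through Theorem~\ref{th521} at all: it takes the closed-form Laplace transform \eqref{eq5122} of $m_1(t,x)$ (from \cite[lemma~5.1.4]{YarBRW:e}), substitutes $\beta_c=G_0^{-1}(0,0)$, extracts the $\lambda\to0$ asymptotics directly from the Green-function expansions of \cite[theorem~1]{Y18-TPA:e}, and then applies the Tauberian theorem of \cite[Ch.~XIII]{Feller}, using the monotonicity of $m_1(t,x)$ supplied by \eqref{eq145} --- essentially the ``fully parallel alternative'' you sketch at the end. Your primary route is more economical (no second Tauberian application, only an elementary $\varepsilon$-splitting), but it makes Theorem~\ref{th523} logically dependent on Theorem~\ref{th521}, whereas the paper's version is self-contained given the Green-function asymptotics; since Theorem~\ref{th521} precedes Theorem~\ref{th523} in the exposition, this dependence is harmless. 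Both proofs are valid.
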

\begin{proof}
For the case $d/\alpha\in(1/2,1]$, we have $\beta=\beta_c=0$ by $\rm(i)$ in lemma~\ref{lem316}. Then equation~\eqref{eq145} implies that $m_1(t,x) \equiv 1$.

For the case $d/\alpha\in(1,\infty)$, we find the asymptotics of $m_1(t, x)$ as $t\to\infty$ by evaluating the asymptotics of its Laplace transform $\widehat{m}_1(\lambda, x)$ as $\lambda\to0$ using the Tauberian theorems (see~\cite[Ch. XIII]{Feller}).
For $\beta\leq\beta_c$ by~\cite[lemma~5.1.4]{YarBRW:e}, the Laplace transform of the solution $m_1(t,x)$ of the Cauchy problem~\eqref{eq5125} for $\Real\lambda>0$ is defined and can be expressed in the form
\begin{equation}
\label{eq5122}
\widehat{m}_1(\lambda,x) = \frac{1 - \beta(G_{\lambda}(0,0) - G_{\lambda}(x,0))}{\lambda(1 - \beta G_{\lambda}(0,0))}.
\end{equation}
Herewith, we have $\beta=\beta_c=G^{-1}_0(0,0)$ by $\rm(ii)$, $\rm(iii)$ in lemma~\ref{lem316}.
Then by asymptotic relations for $G_{\lambda}(0,0)$ from \cite[theorem~1]{Y18-TPA:e}, we get that the asymptotics of $\widehat{m}_1(\lambda,x)$ as $\lambda\to0$ is $G_0(x,0)\gamma_{d,\alpha}^{-1}f(\lambda)$, where
\begin{alignat*}{2}
f(\lambda) &= \lambda^{-d/\alpha}\quad & \text{for~} & d/\alpha\in(1/2,1),\\
f(\lambda) &= \lambda^{-2} \ln^{-1}(1/\lambda) \quad& \text{for~} & d/\alpha=2,\\
f(\lambda) &= \lambda^{-2} \quad & \text{for~} & d/\alpha\in(2,\infty).
\end{alignat*}

As is seen from the integral equation~\eqref{eq145}, the function $m_1(t,x)$ is monotone. Then  applying the Tauberian theorem (see~\cite[Ch. XIII]{Feller}) to $\widehat{m}_1(\lambda,x)$  we find the asymptotics of $m_1(t,x)$ indicated in the statement of the theorem.
\end{proof}

Let us complete the study of the asymptotic behaviour as $t\to\infty$ of the moments of particle number for the critical BRW.

\begin{theorem}
Let $\beta=\beta_c$. Then for the BRW on $\mathbb{Z}^d$, $d\in\mathbb{N}$, under the condition~\eqref{E:infindisp}, and every $x, y\in\mathbb{Z}^d$, $n\in\mathbb{N}$, the following asymptotic relations hold
\begin{equation*}
m_n(t,x,y)\sim C_n(x,y)u_n(t),\quad m_n(t,x)\sim C_n(x)v_n(t),\qquad t\to\infty,
\end{equation*}
where $C_n(x,y)$, $C_n(x)$ are positive constants, and the functions $u_n(t)$, $v_n(t)$ are as follows
\begin{alignat*}{3}
u_n(t) &= t^{-1/\alpha},\quad & v_n(t) &= t^{(1-1/\alpha)(n-1)}\quad &\text{for~}& d/\alpha\in(1/2,1),\\
u_n(t) &= t^{-1},\quad & v_n(t) &= (\ln t)^{n-1}\quad &\text{for~}& d/\alpha=1,\\
u_n(t) &= t^{d/\alpha-2}, & v_n(t) &= t^{(d/\alpha-1)(2n-1)}\quad &\text{for~}& d/\alpha\in(1,3/2),\\
u_n(t) &= t^{-1/2}(\ln t)^{n-1},\quad & v_n(t) &= t^{n-1/2}\quad &\text{for~}& d/\alpha=3/2,\\
u_n(t) &= t^{(d/\alpha-2)(2n-1)+n-1},\quad & v_n(t) &= t^{(d/\alpha-1)(2n-1)}\quad & \text{for~}&d/\alpha\in(3/2,2),\\
u_n(t) &= t^{n-1}(\ln t)^{1-2n},\quad & v_n(t) &= t^{2n-1}(\ln t)^{-2n+1}\quad &\text{for~}& d/\alpha=2,\\
u_n(t) &= t^{n-1},\quad & v_n(t) &= t^{2n-1}\quad &\text{for~}& d/\alpha\in(2,\infty).
\end{alignat*}
\end{theorem}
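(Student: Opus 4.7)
The plan is to proceed by strong induction on $n$, with the base case $n=1$ provided by Theorems~\ref{th522}, \ref{th521}, and \ref{th523}. The inductive engine is the pair of integral equations \eqref{eq146}--\eqref{eq147} together with the explicit form \eqref{eq135} of the branching nonlinearity $g_n$. Substituting the inductive asymptotics $m_i(s,0,y)\sim C_i(0,y)u_i(s)$ and $m_i(s,0)\sim C_i(0)v_i(s)$ for $i<n$ into $g_n$, I would identify its dominant monomial and then extract the large-$t$ asymptotics of the convolution $\int_0^t m_1(t-s,x,0)\,g_n(\dots)\,\mathrm{d}s$ that appears in the integral equations.

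To locate the dominant monomial in $g_n$, I would compare the $s$-exponents of the products $u_{i_1}(s)\cdots u_{i_r}(s)$ (respectively $v_{i_1}(s)\cdots v_{i_r}(s)$) over partitions $i_1+\cdots+i_r=n$, $i_j\ge 1$. The dominating partition varies with the regime: when $u_i(s)$ or $v_i(s)$ is roughly independent of $i$ (as on the $u$-side for $d/\alpha\in(1,3/2)$), shorter products with $r=2$ win; when the rate grows with $i$ (as on the $v$-side for $d/\alpha\in(1/2,1)$ or on the $u$-side for $d/\alpha>3/2$), one must weigh the $\beta^{(r)}$ factorial coefficients against the power-of-$s$ contribution. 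For the total-population moments $m_n(t,x)$ the partition $(n-1,1)$ weighted by $\beta^{(2)}$ is typically dominant, giving the simple recursion $v_n\sim \mathrm{const}\cdot\int_0^t m_1(t-s,x,0)\,v_{n-1}(s)\,\mathrm{d}s$.

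To evaluate the resulting convolution I would split $[0,t]$ into $[0,t/2]$ and $[t/2,t]$ and use, on each piece, the corresponding asymptotic of $m_1$ from Theorems~\ref{th522}, \ref{th521}, \ref{th523}. The leading terms reduce to Beta-type integrals $\int_0^1(1-u)^a u^b\,\mathrm{d}u$, whose convergence supplies the Gamma-factors absorbed into $C_n(x,y)$ or $C_n(x)$; alternatively, and more uniformly across cases, I would invoke the convolution lemma of~\cite{Y10-AiT:e} already exploited in the proof of Theorem~\ref{th521}. The contribution from the convolution must finally be compared with the additive term $m_1(t,x,y)$ (respectively $m_1(t,x)$) in \eqref{eq146}--\eqref{eq147}, and whichever is asymptotically larger determines $u_n(t)$ (respectively $v_n(t)$); this comparison is precisely what produces the phase transition at $d/\alpha=3/2$ on the $u$-side.

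The main obstacle is the case analysis. Each of the seven regimes of $d/\alpha$ admits its own dominating partition and its own Beta-integral; more seriously, at the threshold values $d/\alpha=1,\,3/2,\,2$ two dominant monomials contribute at the same order and/or the Beta-type integral diverges logarithmically at an endpoint. This is the mechanism generating the logarithmic factors $(\ln t)^{n-1}$ and $(\ln t)^{1-2n}$ in the statement. Verifying that these log-powers propagate through the induction with exactly the announced exponents requires, at each threshold, a refined Tauberian-type estimate of the convolution in place of the routine Beta-integral computation, and this threshold-by-threshold bookkeeping is the technically delicate heart of the argument.
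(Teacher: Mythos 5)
Your proposal follows essentially the same route as the paper: induction on $n$ via the integral equations \eqref{eq146}--\eqref{eq147}, identification of the dominant (quadratic) part of $g_n$ from \eqref{eq135} using the inductive asymptotics, evaluation of the convolution $\int_0^t m_1(t-s,x,0)\,g_n(\dots)\,\rd s$ by the convolution lemma of~\cite{Y10-AiT:e}, and a final comparison of that term with the additive $m_1$-term to decide which dominates. Your additional remarks on the threshold cases $d/\alpha=1,3/2,2$ and the origin of the logarithmic factors are consistent with, and somewhat more explicit than, the paper's own argument.
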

\begin{proof}
We will find the asymptotics of the moments by induction using the formulas \eqref{eq146}, \eqref{eq147} and \eqref{eq135}.

First we prove the assertions of the theorem for the local moments $m_n(t,x,y)$, $n\geq1$. Denote the convolution
\begin{equation}
W_n(t) := \int_0^tm_1(t-s,x,0)g_n(m_1(s,0,y),\dots,m_{n-1}(s,0,y))\rd s. \notag
\end{equation}
By theorems~\ref{th522} and~\ref{th521}, we have the asymptotics of $m_1(t,x,0)$. Then by the definition~\eqref{eq135}, we find the asymptotics of $g_n(m_1(s,0,y),\dots,m_{n-1}(s,0,y))$, when $n=2$. Now we can find the asymptotics of convolution $W_n(t)$, $n=2$, by~lemma~2 for convolutions from~\cite{Y10-AiT:e}. To find the asymptotics of $m_n(t,x,y)$, $n=2$, it remains only to express one by the formula~\eqref{eq146} and compare the growth rate of $m_1(t,x,y)$ and $W_n(t)$, $n=2$. Following this scheme for $n>2$, we obtain statements of the theorem for $m_n(t,x,y)$, $n>2$.

Using a similar reasoning scheme and applying theorem~\ref{th523}, we find asymptotics of the moments $m_n(t,x)$, $n\geq2$.
\end{proof}

\section{Subcritical BRW}
\label{sect:SbBRW}

Let us find the asymptotic behaviour as $t\to\infty$ of the mean of particle population size $m_1(t, x)$ when the process started from point $x\in\mathbb{Z}^d$.

\begin{theorem}
\label{th531}
Let $\beta<\beta_c$. Then the solution $m_1(t, x)$, $x\in\mathbb{Z}^d$, $d\in\mathbb{N}$, of the Cauchy problem~\eqref{eq5125} under $d/\alpha\in(1/2, \infty)$ satisfies the relation
\[
m_1(t, x) \sim C_1(x)v_1(t),\qquad t\to\infty,
\]
where the functions $C_1(x)$ and $v_1(t)$ are as follows
\begin{alignat*}{3}
C_1(x) &=-(\beta\gamma_{1,\alpha}\Gamma(1/\alpha))^{-1},\quad & v_1(t) &= t^{ 1/\alpha-1}\quad& \text{for~}& d/\alpha\in(1/2,1),\\
C_1(x) &=-(\beta\gamma_{1,1})^{-1},\quad & v_1(t) &= \ln^{-1}t\quad & \text{for~}&d/\alpha=1,\\
C_1(x) &=\frac{1-\beta (G_{0}(0,0)-G_{0}(x,0))}{1-\beta G_{0}(0,0)},\quad &v_1(t) &\equiv1\quad & \text{for~}&d/\alpha\in(1,\infty),
\end{alignat*}
and $\gamma_{1,\alpha}$ are positive constants.
\end{theorem}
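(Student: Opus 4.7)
The plan is to mimic the Laplace-transform approach of the proof of Theorem~\ref{th523}. By~\cite[lemma~5.1.4]{YarBRW:e}, which remains valid throughout $\beta\le\beta_c$, the Laplace transform $\widehat{m}_1(\lambda,x)=\int_0^\infty e^{-\lambda t}m_1(t,x)\,\rd t$ admits the representation
\begin{equation*}
\widehat{m}_1(\lambda,x)=\frac{1-\beta(G_\lambda(0,0)-G_\lambda(x,0))}{\lambda(1-\beta G_\lambda(0,0))},\qquad\Real\lambda>0,
\end{equation*}
so the task reduces to (a) extracting the $\lambda\to 0$ behaviour of this expression in each regime of $d/\alpha$, and (b) inverting by a Tauberian theorem (\cite[Ch.~XIII]{Feller}). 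The monotonicity of $m_1(t,x)$ required for the Tauberian inversion is read off directly from~\eqref{eq145}: differentiating gives $\frac{\rd}{\rd t}m_1(t,x)=\beta\,m_1(t,x,0)$, which keeps a constant sign in $t$ because $m_1(t,x,0)\ge 0$.

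Next I would split into the three regimes and insert the asymptotics of $G_\lambda(0,0)$ from~\cite[theorem~1]{Y18-TPA:e} (the same input used in the critical case). When $d/\alpha\in(1/2,1)$, Lemma~\ref{lem316}(i) gives $\beta_c=0$ and thus $\beta<0$, while $G_\lambda(0,0)\sim\gamma_{d,\alpha}\lambda^{d/\alpha-1}$ diverges; the denominator therefore behaves as $-\beta\gamma_{d,\alpha}\lambda^{d/\alpha}$, and the numerator tends to a finite positive limit because $G_\lambda(0,0)-G_\lambda(x,0)$ stays bounded (see below). Hence $\widehat{m}_1(\lambda,x)\sim C\lambda^{-d/\alpha}$, whose Tauberian inversion produces the factor $t^{1/\alpha-1}$ (note that $d/\alpha<1$ and $\alpha<2$ force $d=1$, which also explains the subscript $\gamma_{1,\alpha}$). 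When $d/\alpha=1$ (which forces $d=1$, $\alpha=1$) the same computation with $G_\lambda(0,0)\sim\gamma_{1,1}\ln(1/\lambda)$ yields $\widehat{m}_1(\lambda,x)\sim C(\lambda\ln(1/\lambda))^{-1}$, giving $m_1(t,x)\sim C/\ln t$. When $d/\alpha\in(1,\infty)$, Lemma~\ref{lem315} guarantees that $G_\lambda(0,0)\to G_0(0,0)$ and $G_\lambda(x,0)\to G_0(x,0)$ are both finite, and since $\beta<\beta_c=G_0(0,0)^{-1}$ by Lemma~\ref{lem316}(ii)--(iii) the factor $1-\beta G_0(0,0)$ stays positive and bounded away from zero, so $\widehat{m}_1(\lambda,x)\sim C_1(x)/\lambda$ with precisely the $C_1(x)$ stated in the theorem.

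The main obstacle I anticipate is the control of $G_\lambda(0,0)-G_\lambda(x,0)$ in the cases $d/\alpha\le 1$: the individual terms both blow up as $\lambda\to 0$, yet their difference must converge to a finite limit with a remainder small enough that the numerator of $\widehat{m}_1$ acts as an effective constant against the divergent denominator. Existence of the limit is a direct Fourier-integral computation: the integrand $(1-\cos\langle\theta,x\rangle)/|\phi(\theta)|$ is of order $|\theta|^{2-\alpha}$ near the origin by virtue of~\eqref{E:infindisp} and the two-sided bound on $|\phi(\theta)|$ used in Lemma~\ref{lem314}, and is therefore integrable on $[-\pi,\pi]^d$ for every $\alpha\in(0,2)$. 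Quantifying the $\lambda$-remainder, which in particular controls any $x$-dependent correction to the leading prefactor, is the most delicate technical point; a change of variables $\theta=\lambda^{1/\alpha}\eta$ on the singular Fourier integral gives a remainder of order $\lambda^{d/\alpha+2/\alpha-1}$, which is genuinely smaller than the leading $\lambda^{d/\alpha-1}$ contribution of $G_\lambda(0,0)$ in both problematic regimes, so that the Tauberian conclusion is unaffected.
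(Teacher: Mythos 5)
Your proposal reproduces the paper's proof essentially step for step: the representation \eqref{eq5122} from \cite[lemma~5.1.4]{YarBRW:e}, the $\lambda\to0$ asymptotics of $G_{\lambda}(0,0)$ from \cite[theorem~1]{Y18-TPA:e}, monotonicity of $m_1(t,x)$ read off from \eqref{eq145}, and the Tauberian theorem for densities from \cite[Ch.~XIII]{Feller}, with the same case split and the same orders $t^{1/\alpha-1}$, $\ln^{-1}t$ and $1$. The transient case $d/\alpha\in(1,\infty)$ is complete and matches the paper exactly.

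The one step to tighten is the numerator in the recurrent regimes $d/\alpha\in(1/2,1]$. You correctly show that $G_{\lambda}(0,0)-G_{\lambda}(x,0)$ converges as $\lambda\to0$, but its limit is $\int_0^{\infty}\bigl(p(s,0,0)-p(s,x,0)\bigr)\rd s$, a strictly positive, $x$-dependent number for $x\neq0$ (cf.~\eqref{p_p}); hence the numerator of \eqref{eq5122} tends to the function $g(x)$ of \eqref{g(x)}, not to $1$. This is a surviving factor in the leading prefactor, not an ``$x$-dependent correction'' killed by the $\lambda$-remainder as your closing paragraph suggests --- the remainder only measures the speed of convergence to that limit. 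A literal execution of your computation therefore gives $C_1(x)=-g(x)(\beta\gamma_{1,\alpha}\Gamma(1/\alpha))^{-1}$ for $d/\alpha\in(1/2,1)$ (and likewise a factor $g(x)$ at $d/\alpha=1$), which is also what \eqref{eq145} combined with the asymptotics of $m_1(t,x,0)$ in theorem~\ref{th533} forces. The paper's own proof makes the same elision (it retains only the $x$-independent leading term of $G_{\lambda}(x,0)$ and silently drops the convergent difference), so your argument is not weaker than the printed one; but you should either carry the factor $g(x)$ through to the final constant or state explicitly why you discard it, rather than attributing the $x$-dependence to a vanishing remainder.
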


\begin{proof}
To find the asymptotics of the function $m_1(t,x)$, we will use the Tauberian theorems (see~\cite[Ch. XIII]{Feller}). By \cite[lemma~5.1.4]{YarBRW:e}, which remains valid in the case of heavy tails~\eqref{E:infindisp}, the Laplace transform of the solution $m_1(t,x)$ of the Cauchy problem~\eqref{eq5125} has the representation by the Green's function \eqref{eq5122}
for $\lambda$ with $\Real\lambda>l$ where $l$ is some non-negative number.

The asymptotics of $G_{\lambda}(0,0)$ as $\lambda\to0$  was found in~\cite[theorem~1]{Y18-TPA:e}. Inspecting its proof, we can observe that the first term of the asymptotics of $G_{\lambda}(x,y)$ with arbitrary $x,y\in\mathbb{Z}^d$ has the same form, because  by~\eqref{eq2113} the asymptotic representation of the transition probability $p(t,x,y)$ is $h_{\alpha,d} /t^{d/\alpha}$, and it does not depend on the lattice points~$x,y$. Then the next relations, as $\lambda\to0$, follow
\begin{alignat*}{2}
\widehat{m}_1(\lambda,x) &\sim -(\beta \gamma_{1,\alpha})^{-1}\lambda^{-1/\alpha}\quad &\text{for~} &d/\alpha\in(1/2,1),\\
\widehat{m}_1(\lambda,x) &\sim (\beta \gamma_{1,1})^{-1} (\lambda\ln\lambda)^{-1}\quad & \text{for~} &d/\alpha=1,
\end{alignat*}
where the constants $\gamma_{d,\alpha}>0$ were defined in~\cite[theorem~1]{Y18-TPA:e}.

Due to the monotonicity of $m_1(t,x)$ by~\eqref{eq145}, we can apply the Tauberian theorem for densities (see~\cite[Ch. XIII, theorem~4]{Feller}) and get the following asymptotic relations as $t\to\infty$
\begin{alignat*}{2}
m_1(t,x) &\sim -(\beta\gamma_{1,\alpha}\Gamma(1/\alpha))^{-1}t^{(1-\alpha)/\alpha}\quad & \text{for~} & d/\alpha\in(1/2,1),\\
m_1(t,x) &\sim -(\beta\gamma_{1,1})^{-1}(\ln t)^{-1}\quad & \text{for~} &d/\alpha=1.
\end{alignat*}

Let us consider the remaining case $d/\alpha\in(1,\infty)$, in which for every $x,y\in\mathbb{Z}^d$ the value of $G_0(x,y)$ is finite. Here by~\eqref{eq5122} we have
\begin{equation*}
\widehat{m}_1(\lambda,x) \sim \frac{1-\beta (G_{0}(0,0)-G_{0}(x,0))}{\lambda (1-\beta G_{0}(0,0))},\qquad \lambda\to0,
\end{equation*}
and therefore, by the Tauberian theorem for densities (see~\cite[Ch. XIII, theorem~4]{Feller}), the asymptotics  of the mean size of particle population is
\begin{equation*}
m_1(t,x) \sim \frac{1-\beta (G_{0}(0,0)-G_{0}(x,0))}{1-\beta G_{0}(0,0)},\qquad t\to\infty.
\end{equation*}
Theorem~\ref{th531} is proved.
\end{proof}

Now we can find the asymptotics as $t\to\infty$ of mean of particle number $m_1(t, x, y)$ at every point $y\in\mathbb{Z}^d$ when process started from $x\in\mathbb{Z}^d$.

\begin{theorem}\label{th533}
Let $\beta<\beta_c$.
Then for the BRW on $\mathbb{Z}^d$, $d\in\mathbb{N}$, satisfying condition~\eqref{E:infindisp}, with representations for the first moments
\begin{equation*}
m_1(t, 0, 0)\sim C_1(0, 0)u_1(t),\quad m_1(t, x) \sim C_1(x)v_1(t),\qquad t\to\infty,
\end{equation*}
for every $x, y\in\mathbb{Z}^d$ the following asymptotic relation hold
\begin{equation}
\label{eqth533}
m_1(t,x,y) \sim C_1(x,y) u_1(t),\qquad t\to\infty,
\end{equation}
where
\begin{alignat*}{3}
C_1(x,y) &= C_1(0,0)g(x)g(y),\quad & u_1(t)&= t^{1/\alpha-2}\quad&\text{for~}& d/\alpha\in(1/2,1),\\
C_1(x,y) &= C_1(0,0)g(x)g(y),\quad & u_1(t)&=t^{-1}\ln^{-2}t\quad&\text{for~}& d/\alpha=1,\\
C_1(x,y) &= (C_1(x) + \beta G_0(y,0)C_1(0)) h_{\alpha,d} \quad  & u_1(t)&=t^{-d/\alpha}\quad&\text{for~}& d/\alpha\in(1,\infty),\\
&\quad+\beta^2 C_1(0,0) G_0(x,0)G_0(y,0),\quad & & & &
\end{alignat*}
and
\begin{equation}
\label{g(x)}
g(x) := 1-\beta\int_0^{\infty}(p(t,0,0)-p(t,x,0))\rd s,
\end{equation}
and $h_{\alpha, d}$ is defined in~\eqref{eq2113}.
\end{theorem}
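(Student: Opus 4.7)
The plan is to pass from the assumed asymptotics of $m_1(t,0,0)$ and $m_1(t,x)$ to the asymptotics of $m_1(t,x,y)$ via the two integral equations~\eqref{eq142} and~\eqref{eq143}, applied in succession: first I would obtain $m_1(s,x,0)$ from~\eqref{eq142} with $y=0$, and then $m_1(t,x,y)$ from~\eqref{eq143}. The analysis splits naturally into the transient range $d/\alpha\in(1,\infty)$ and the recurrent range $d/\alpha\in(1/2,1]$, which will behave qualitatively differently.

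In the transient case both $p(\cdot,x,0)$ and $m_1(\cdot,0,0)$ decay like $t^{-d/\alpha}$ with $d/\alpha>1$, so both are integrable. I would apply the convolution lemma from~\cite{Y10-AiT:e} to~\eqref{eq142} with $y=0$; the resulting asymptotic $m_1(s,x,0)\sim K(x)s^{-d/\alpha}$ would be computed from the explicit integrals $\int_0^\infty p(u,x,0)\rd u=G_0(x,0)$ and $\int_0^\infty m_1(u,0,0)\rd u=(C_1(0)-1)/\beta$ (the latter extracted from~\eqref{eq145} and the finite limit of $m_1(t,x)$ in Theorem~\ref{th531}), giving $K(x)=h_{\alpha,d}C_1(0)+\beta C_1(0,0)G_0(x,0)$. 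Feeding $K(x)$ back into~\eqref{eq143} and repeating the same convolution argument would produce the stated formula
\[
C_1(x,y)=\bigl(C_1(x)+\beta G_0(y,0)C_1(0)\bigr)h_{\alpha,d}+\beta^2 C_1(0,0)G_0(x,0)G_0(y,0).
\]

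The recurrent range is more delicate: $p(\cdot,x,0)$ is not integrable while $m_1(\cdot,0,0)$ is, and a direct convolution analysis yields only the cancellation of the would-be leading term $h_{\alpha,d}s^{-d/\alpha}$. To extract the true next-order behavior I would use the identity $\beta\int_0^s p(s-u,0,0)m_1(u,0,0)\rd u=m_1(s,0,0)-p(s,0,0)$ obtained by specializing~\eqref{eq142} to $x=y=0$, and rewrite~\eqref{eq142} as
\[
m_1(s,x,0)=\bigl[p(s,x,0)-p(s,0,0)\bigr]+m_1(s,0,0)-\beta\int_0^s\bigl[p(s-u,0,0)-p(s-u,x,0)\bigr]m_1(u,0,0)\rd u.
\]
By~\eqref{p_p} the bracket $p(v,0,0)-p(v,x,0)$ is absolutely integrable with $\beta\int_0^\infty\bigl[p(v,0,0)-p(v,x,0)\bigr]\rd v=1-g(x)$, while $p(s,x,0)-p(s,0,0)=O(s^{-(d+2)/\alpha})$ is negligible compared to $m_1(s,0,0)$. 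A change of variable $v=s-u$ combined with the slow variation of $m_1(\cdot,0,0)$ would then give $m_1(s,x,0)\sim g(x)\,m_1(s,0,0)$. Applying the analogous manipulation to~\eqref{eq143} via the identity $\beta\int_0^t m_1(s,x,0)p(t-s,0,0)\rd s=m_1(t,x,0)-p(t,x,0)$ would yield $m_1(t,x,y)\sim g(y)\,m_1(t,x,0)\sim g(x)g(y)\,m_1(t,0,0)$, matching $C_1(x,y)=C_1(0,0)g(x)g(y)$.

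The main obstacle will be the rigorous justification of the asymptotic $\int_0^s\bigl[p(v,0,0)-p(v,x,0)\bigr]m_1(s-v,0,0)\rd v\sim(1-g(x))m_1(s,0,0)/\beta$: one has to argue that the peaked integrable kernel may be replaced by its total mass times the slowly varying $m_1(\cdot,0,0)$ at $v=0$, which requires a careful application of the convolution lemma from~\cite{Y10-AiT:e} together with the tail estimate~\eqref{p_p}. The borderline $d/\alpha=1$ case should follow the same scheme with the logarithmic factor already built into the hypothesis $m_1(t,0,0)\sim C_1(0,0)t^{-1}\ln^{-2}t$, since the kernel $p(v,0,0)-p(v,x,0)$ remains absolutely integrable.
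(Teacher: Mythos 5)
Your proposal is correct and follows essentially the same route as the paper: the transient case $d/\alpha\in(1,\infty)$ is handled by applying the convolution lemma of~\cite{Y10-AiT:e} directly to~\eqref{eq142} and then~\eqref{eq143}, with the coefficients $1+\beta\int_0^\infty m_1(s,0,0)\rd s=C_1(0)$ and $G_0(\cdot,0)$ recovered from~\eqref{eq145}, while in the recurrent case $d/\alpha\in(1/2,1]$ your rearrangement via the $x=y=0$ identity is algebraically the same as the paper's difference equations for $m_1(t,x,0)-m_1(t,0,0)$ and $m_1(t,x,y)-m_1(t,x,0)$ with the integrable kernel $p(\cdot,0,0)-p(\cdot,x,0)$ controlled by~\eqref{p_p}. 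The technical point you flag at the end is exactly the one the paper also resolves by the same convolution lemma, so no gap remains.
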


\begin{proof}
First, we find the asymptotics of $m_1(t, x, 0)$. Substituting $y=0$ into the integral equation~\eqref{eq142}, we obtain
\begin{equation}
\label{m_1(t,x,0)}
m_1(t, x, 0) = p(t, x, 0) + \beta\int_0^tp(t-s, x, 0) m_1(s, 0, 0)\rd s,\\
\end{equation}
where the asymptotics of $p(t, x, 0)$ is known by the relation~\eqref{eq2113}
and the asymptotics of $m_1(t, 0, 0)$ is known by \cite[theorem~3]{R20:FPM}.

Note, that for $d/\alpha\in(1/2, 1]$ due to~lemma~2 for convolutions from~\cite{Y10-AiT:e}, the asymptotics of the integral $\int_0^tp(t-s, x, 0) m_1(s, 0, 0)\rd s$ has the same principal term up to a constant as the asymptotics of $p(t, x, 0)$. Then the sum of the coefficients on the right-hand side of the equation~\eqref{m_1(t,x,0)} satisfies the limit equalities
\begin{equation*}
\lim_{t\to\infty}\left(1+\beta \int_0^{t}m_1(s,0,0)\rd s\right) = \lim_{t\to\infty}m_1(t,0) = 0,
\end{equation*}
due to representation of the $m_1(t, 0)$ by the integral equation~\eqref{eq145} and the limit of $m_1(t, 0)$  by the theorem~\ref{th531}.
But for $d/\alpha\in(1, \infty)$, not all coefficients at principal terms of the asymptotics of the right-hand side~\eqref{m_1(t,x,0)} tend to zero.
For this reason, further we consider the case $d/\alpha\in(1/2, 1]$, when the BRW is recurrent, and the case $d/\alpha\in(1, \infty)$, when the BRW is transient, separately.

For $d/\alpha\in(1/2,1]$, by the integral equation~\eqref{eq142} and symmetry of the random walk, we have
\begin{equation}
\label{eq5320}
m_1(t,x,0) - m_1(t,0,0) = f(t,0,x) + \beta \int_0^{t}f(t-s,0,x) m_1(s,0,0)\rd s,
\end{equation}
where
\begin{equation*}
f(t,z_1,z_2) := p(t,z_1,z_2) - p(t,z_1,0)
\end{equation*}
for any $z_1, z_2\in\mathbb{Z}^d$.
To find the asymptotics of $f(t, 0, x)$, we use the symmetry of the underlying random walk and apply the statement about asymptotics of $p(t,0,0)-p(t,x,0)$ from~\eqref{p_p}.
The asymptotics of $m_1(t,0,0)$ is known by~\cite[theorem~3]{R20:FPM}.  To find the asymptotics of the integral in~\eqref{eq5320}, it remains to use lemma~2 for convolutions from~\cite{Y10-AiT:e}. Then we obtain the equality
\begin{equation*}
\int_0^{t}f(t-s,0,x) m_1(s,0,0)\rd s = (1+\alpha_1(t)) m_1(t,0,0) \int_0^{\infty} f(s,0,x)\rd s,
\end{equation*}
for some $\alpha_1(t)\to0$, $t\to\infty$.
Finally, from equation~\eqref{eq5320}, the asymptotic relation follows
\begin{equation}
\label{eq5321}
m_1(t,x,0) \sim  g(x) m_1(t,0,0),\qquad t\to\infty,
\end{equation}
where $g(x)$ is defined in~\eqref{g(x)}.

By the integral equation~\eqref{eq143} and symmetry of random walk, we can write also
\begin{equation}
\label{eq5321b}
m_1(t,x,y) - m_1(t,x,0) = f(t,x,y) + \beta \int_0^{t}f(t-s,0,y) m_1(s,x,0)\rd s.
\end{equation}
Properties of underlying random walks provide the equality $f(t,x,y) = - (p(t,0,0) - p(t,x-y,0)) + (p(t,0,0) - p(t,x,0))$, and then by the asymptotic relation~\eqref{p_p} we have
\begin{equation}
\label{ftxy}
f(t,x,y) \sim (\widehat{\gamma}_{d,\alpha}(x) - \widehat{\gamma}_{d,\alpha}(x\!-\!y)) t^{-{(d+2)/\alpha}},\qquad t\to\infty.
\end{equation}
We now use the relations~\eqref{eq5321} and \eqref{ftxy} to find the asymptotics of the integral in~\eqref{eq5321b} by~lemma~2 for convolutions from~\cite{Y10-AiT:e}. Then by relation~\eqref{eq5321b}, it follows that
\begin{equation*}
m_1(t,x,y) \sim g(x)g(y) m_1(t,0,0),\qquad t\to\infty,
\end{equation*}
where asymptotics of~$m_1(t,0,0)$ is found in~\cite[theorem~3]{R20:FPM}.

For the case $d/\alpha\in(1, \infty)$, by the integral equation~\eqref{eq142} we express $m_1(t, x, 0)$. We use the asymptotics of $p(t, x, y)$ in \eqref{eq2113} and  the asymptotics of $m_1(t, 0, 0)$ in \cite[theorem~3]{R20:FPM} to find the asymptotics of the integral by~lemma~2 for confolutions from~\cite{Y10-AiT:e}. Then in view of theorem~\ref{th531}, the following equality holds
\begin{align*}
m_1(t,x,0) &= \left(1+(1+a_{d,\alpha}(t,x,0))\beta\int_0^{\infty}m(s,0,0)\rd s\right)p(t,x,0)\\
 &\quad+ \left((1+ b_{d,\alpha}(t,0,0)) \beta\int_0^{\infty}p(s,x,0)\rd s\right)m_1(t,0,0),
\end{align*}
where $a_{d,\alpha}(t,x,0)\to0$, $b_{d,\alpha}(t,0,0)\to0$ as $t\to\infty$ for each $x\in\mathbb{Z}^d$. Then by~\eqref{eq145}
\begin{equation*}
m_1(t,x,0) \sim m_1(t,0)p(t,x,0) + \beta G_0(x,0) m_1(t,0,0),\qquad t\to\infty.
\end{equation*}
In the same way, from the integral equation~\eqref{eq143} we derive
\begin{align*}
m_1(t,x,y) &=\ p(t,x,y) + \left((1+a_{d,\alpha}(t,0,y))\beta \int_0^{\infty}m_1(s,x,0)\rd s\right)p(t,0,y) \\
&\quad+ \left((1+b_{d,\alpha}(t,x,0))\beta\int_0^{\infty}p(s,0,y)\rd s\right)m_1(t,x,0),
\end{align*}
where $a_{d,\alpha}(t,0,y)\to0$, $b_{d,\alpha}(t,x,0)\to0$ as $t\to\infty$ for each $x, y\in\mathbb{Z}^d$. Then, due to the independence of the first term of the asymptotics $p(t,x,y)$  from the coordinates $x,y\in\mathbb{Z}^d$ and by equality~\eqref{eq145}, the asymptotic equality holds
\begin{equation}
m_1(t,x,y) \sim m_1(t, x)h_{\alpha,d}\,t^{-d/\alpha} + \beta G_0(y,0) m_1(t,x,0),\qquad t\to\infty. \notag
\end{equation}
From here  we derive the asymptotic representation \eqref{eqth533} for $m_1(t,x,y)$.
Theorem~\ref{th533} is proved.
\end{proof}

Analysis of the first-order moments $m_1(t, x)$ and $m_1(t, x, y)$ was fulfilled in~theorem~\ref{th531} and theorem~\ref{th533}. In the next theorem we extend the related results,  for the subcritical BRW, to the case of higher-order moments $m_n(t, x)$ and $m_n(t, x, y)$, $n\geq 2$.

\begin{theorem}
\label{eqlemma532}
Let $\beta<\beta_c$.
Then for the BRW on $\mathbb{Z}^d$, $d\in\mathbb{N}$, satisfying condition~\eqref{E:infindisp}
for every $n\geq1$ and every $x, y\in\mathbb{Z}^d$ the asymptotic relations hold
\begin{alignat}{2}
m_n(t, x, y) &\sim C_n(x,y)u_1(t),&\quad t\to\infty,\label{mn(t,x,y)}\\
m_n(t, x) &\sim C_n(x)v_1(t),& \quad t\to\infty. \label{mn(t,x)}
\end{alignat}
Here $C_1(x, y)$, $u_{1}(t)$ and $C_1(x)$, $v_{1}(t)$ are obtained in theorem~\ref{th533}, while for $n\geq2$ we have
\[
C_n(x,y) = C_1(x,y) + C_1(x,0)\int_0^{\infty}g_n(m_1(s,0,y),\dots,m_{n-1}(s,0,y))\rd s,
\]
where for the function $g_n(m_1(t),\dots,m_{n-1}(t))$ defined in~\eqref{eq135}, with the notation $m_i(t)\equiv m_i(t, 0, y)$, $i\in\mathbb{N}$, the following asymptotic relation holds
\begin{multline}
g_n(m_1(t),m_2(t),\dots,m_{n-1}(t))\\ \sim \frac{\beta^{(2)}}{2}\sum_{i=1}^{n-1}\frac{n!}{i! (n-i)!} m_i(t)m_{n-i}(t),\quad t\to\infty.\label{gn(m)}
\end{multline}
Besides, for $n\geq2$ we also have
\begin{alignat*}{2}
C_n(x) &= C_1(x),\qquad & \text{for~} d/\alpha&\in(1/2,1],\\
C_n(x) &= C_1(x) + \chi_n(x)\int_0^{\infty}m_1(s,x,0)\rd s,\qquad & \text{for~} d/\alpha&\in(1,\infty),
\end{alignat*}
where
\begin{equation}
\label{chi0}
\chi_n(x) = \frac{\beta^{(2)}}{2}\sum_{i=1}^{n-1}\frac{n!}{i! (n-i)!}C_i(x)C_{n-i}(x).
\end{equation}
\end{theorem}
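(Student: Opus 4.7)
The plan is to argue by induction on $n$, with the base case $n=1$ supplied directly by Theorems~\ref{th531} and~\ref{th533}. For the inductive step, assuming \eqref{mn(t,x,y)} and \eqref{mn(t,x)} hold for all indices $1 \le i \le n-1$, I will extract the asymptotics of $m_n$ from the integral equations \eqref{eq146} and \eqref{eq147} via the explicit formula \eqref{eq135} for $g_n$ together with the convolution lemma (Lemma~2) of \cite{Y10-AiT:e}.

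The first task is the auxiliary asymptotic \eqref{gn(m)}. Each summand of \eqref{eq135} is a product of $r \ge 2$ factors $m_{i_k}$; under the inductive hypothesis $m_i(t,0,y) \sim C_i(0,y)u_1(t)$, such a product is of order $u_1(t)^r$. Because $u_1(t)$ decays like a power of $t$ (up to a logarithm) in every sub-regime of Theorem~\ref{th533}, the terms with $r \ge 3$ are of strictly smaller order than those with $r = 2$, which yields \eqref{gn(m)}. The same argument applies to $g_n(m_1(s,0),\dots,m_{n-1}(s,0))$ whenever $v_1(t)\to 0$, namely for $d/\alpha \in (1/2, 1]$; in the regime $d/\alpha \in (1,\infty)$ the factors $m_i(s,0)$ tend to finite positive constants and the limit of $g_n$ is obtained by passing to the limit in \eqref{eq135} directly.

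With the asymptotics of $g_n$ in hand, substituting into \eqref{eq146} gives
\[
m_n(t,x,y) = m_1(t,x,y) + \int_0^t m_1(t-s,x,0)\,g_n(m_1(s,0,y),\dots,m_{n-1}(s,0,y))\,\rd s,
\]
where $m_1(t,x,y) \sim C_1(x,y)u_1(t)$ and $m_1(t-s,x,0) \sim C_1(x,0)u_1(t-s)$, while $g_n(\cdot)(s)$ behaves like $u_1^2(s)$, which is integrable on $[0,\infty)$ in every sub-regime of Theorem~\ref{th533}. The convolution lemma from \cite{Y10-AiT:e} then yields
\[
\int_0^t m_1(t-s,x,0)\,g_n(\cdots)\,\rd s \sim C_1(x,0)\,u_1(t)\int_0^\infty g_n(\cdots)\,\rd s,
\]
so summing with $m_1(t,x,y)$ produces \eqref{mn(t,x,y)} and the stated formula for $C_n(x,y)$. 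The proof of \eqref{mn(t,x)} proceeds analogously from \eqref{eq147}: for $d/\alpha \in (1/2,1]$ the convolution contribution is of order $v_1^2(t) = o(v_1(t))$ and is absorbed into the leading term, so $C_n(x) = C_1(x)$; for $d/\alpha \in (1,\infty)$ the function $v_1(t) \equiv 1$ and $m_1(s,x,0)$ is integrable on $[0,\infty)$ by the decay established in Theorem~\ref{th533}, so the convolution contributes the additional term $\chi_n(x)\int_0^\infty m_1(s,x,0)\,\rd s$.

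The principal technical obstacle is the regime-by-regime verification that the convolution lemma applies and yields the clean limiting expressions above. Because the two factors of the convolutions live on different asymptotic scales---one decaying like $u_1$, the other either decaying like $u_1^2$ or tending to a nonzero constant---one must check in each sub-case of Theorems~\ref{th531} and~\ref{th533} which factor controls the convolution and that the relevant integrability conditions hold. This is especially delicate for $m_n(t,x)$ in the regime $d/\alpha \in (1,\infty)$, where $m_1(t,x)$ tends to a nonzero constant and one must argue that the convolution contributes an additional term of the same order rather than a lower-order correction.
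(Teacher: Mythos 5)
Your proposal is correct and follows essentially the same route as the paper's proof: induction on $n$, reduction of $g_n$ to its $r=2$ block via the decay of $u_1$, and Lemma~2 of \cite{Y10-AiT:e} applied with the integrable factor identified case by case — $g_n\sim\mathrm{const}\cdot u_1^2$ integrable for the local moments, versus $m_1(\cdot,x,0)$ integrable for the total moments, with the resulting dichotomy $C_n(x)=C_1(x)$ for $d/\alpha\in(1/2,1]$ and an extra same-order term for $d/\alpha\in(1,\infty)$. Your observation that for $d/\alpha\in(1,\infty)$ one must pass to the limit in \eqref{eq135} directly (so the $r\ge3$ terms of $g_n(m_1(s,0),\dots)$ survive) is in fact more careful than the paper, whose stated $\chi_n$ in \eqref{chi0} retains only the $r=2$ terms; just make sure your final constant reflects your own (correct) computation rather than the quoted $\chi_n(x)$.
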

\begin{proof}
We use the notation $m_i(t)$, $i\in\mathbb{N}$, to formulate statements that are valid simultaneously for $m_i(t, x, y)$ and  $m_i(t, x)$.
The proof is by induction on $n\geq2$ and consists in
alternate derivation of asymptotics of the functions $g_n(m_1(t), \dots ,m_{n-1}(t))$ and of the moments $m_n(t)$ by the integral equations for moments \eqref{eq146} and \eqref{eq147}.

For $n=2$ by the definition~\eqref{eq135}, we have $g_2(m_1(t)) = \beta^{(2)}m_1^2(t)$.
Then the relation~\eqref{gn(m)} under $n=2$ follows.

This makes it possible to find the asymptotics of $m_2(t, x, y)$ and $m_2(t, x)$ using the integral equations for moments \eqref{eq146} and \eqref{eq147}.
To find asymptotics of convolutions in these equations,
we write down the asymptotics of $u_1(t)$, $u_1^2(t)$ and $v_1^2(t)$ from theorems~\ref{th533} and~\ref{th531}, that is
\begin{alignat*}{4}
u_1(t) &= t^{ 1/\alpha-2},\quad&u_1^2(t) &= t^{ 2/\alpha - 4},\quad&v_1^2(t) &= t^{ 2/\alpha -2}\quad&\text{for~}& d/\alpha\in(1/2,1),\\
u_1(t) &= t^{-1}\ln^{-2}t,\quad&u_1^2(t) &= t^{-2}\ln^{-4}t,\quad&v_1^2(t) &= \ln^{-2}t\quad&\text{for~}& d/\alpha=1,\\
u_1(t) &= t^{-d/\alpha},\quad&u_1^2(t) &= t^{-2d/\alpha},\quad&v_1^2(t) &= 1\quad&\text{for~}& d/\alpha\in(1,\infty),
\end{alignat*}
and apply lemma~2 for convolutions from~\cite{Y10-AiT:e}, namely,
statements (k), (n) to $m_2(t,x,y)$, and (d), (e) to $m_2(t,x)$ for cases $d/\alpha=1$ and $d/\alpha\in(1/2, 1)\cup(1, \infty)$ respectively.
Then the following equalities hold
\begin{align}
\int_0^tm_1(t-s,x,0)&g_2(m_1(s,0,y))\rd s \notag\\
&= (1+a_{d,\alpha}(t))m_1(t,x,0)\int_0^{\infty}g_2(m_1(s,0,y))\rd s, \label{eq1lemma532}\\
\int_0^tm_1(t-s,x,0)&g_2(m_1(s,0)) \rd s \notag\\
&= (1+b_{d,\alpha}(t))g_2(m_1(t,0))\int_0^{\infty}m_1(s,x,0)\rd s,\label{eq2lemma532}
\end{align}
where $a_{d,\alpha}(t)\to0$, $b_{d,\alpha}(t)\to0$, as $t\to0$.
Hence, by the integral equations~\eqref{eq146} and \eqref{eq147}, we obtain the following asymptotic equalities
\begin{alignat*}{2}
m_2(t,x,y) &\sim m_1(t,x,y) + m_1(t,x,0)\int_0^{\infty}g_2(m_1(s,0,y))\rd s,& \quad t\to\infty,\\
m_2(t,x) &\sim m_1(t,x) + g_2(m_1(t,0))\int_0^{\infty}m_1(s,x,0)\rd s,& \quad t\to\infty.
\end{alignat*}
Now we rewrite the asymptotics of the functions above in terms of $C_1(x, y)$, $u_1(t)$, $C_1(x)$, $v_1(t)$ from theorems~\ref{th531} and \ref{th533}, namely
\begin{alignat*}{2}
g_2(m_1(t,x,y)) &\sim \beta^{(2)}C_1^2(x,y)u_1^2(t),& \quad t\to\infty,\\
m_2(t,x,y) &\sim \left(C_1(x,y) + C_1(x,0) \int_0^{\infty}g_2(m_1(s,0,y))\rd s\right)u_1(t),& \quad t\to\infty,\\
g_2(m_1(t,x)) &\sim \beta^{(2)}C_1^2(x)v_1^2(t),& \quad t\to\infty.
\end{alignat*}
Then for function $m_2(t, x)$, we have under $d/\alpha\in(1/2,1]$ the asymptotics
\[
m_2(t,x) \sim C_1(x)v_1(t),\qquad t\to\infty,
\]
and, under $d/\alpha\in(1, \infty)$, the asymptotics
\[
m_2(t,x) \sim \left(C_1(x) + \beta^{(2)}C_1^2(0)\int_0^{\infty}m_1(s,x,0)\rd s\right)v_1(t),\qquad t\to\infty.
\]

Continuing the same reasoning, by alternate applying the formula in definition \eqref{eq135} and the integral equations \eqref{eq146}, \eqref{eq147},  we will obtain the asymptotic representations for $g_n(m_1(t),\dots,m_{n-1}(t))$, $n\geq2$, as polynomials of $u_1(t)$, when $m_n(t)$ is $m_n(t, x, y)$, and as polynomials of $v_1(t)$, when $m_n(t)$ is $m_n(t, x)$. Also we will obtain asymptotic representations of $m_n(t,x,y)$ and  $m_n(t,x)$ as the product of some constant on $u_1(t)$ and $v_1(t)$ respectively.

Assume that for some $n\geq2$ the asymptotic relations \eqref{gn(m)}, \eqref{mn(t,x,y)}, \eqref{mn(t,x)} hold. Then, in particular, we have
\[
m_{n-1}(t,x,y)\sim C_{n-1}(x,y)u_1(t),\quad m_{n-1}(t,x)\sim C_{n-1}(x)v_1(t),\qquad t\to\infty.
\]
Let us prove that these relations are also valid for $n+1$.

Rewrite the formula~\eqref{gn(m)} in the form
\begin{alignat*}{2}
g_n(m_1(t,x,y),\dots,m_{n-1}(t,x,y)) &\sim \frac{\beta^{(2)}}{2}\sum_{i=1}^{n-1}\frac{n!}{i! (n-i)!}C_i(x, y)C_{n-i}(x, y)u_1^2(t),\\
g_n(m_1(t,x),\dots,m_{n-1}(t,x)) &\sim \frac{\beta^{(2)}}{2}\sum_{i=1}^{n-1}\frac{n!}{i! (n-i)!}C_i(x)C_{n-i}(x)v_1^2(t).
\end{alignat*}
Due to the power-logarithmic behaviour of the functions $u_1(t)$ and $v_1(t)$, and by analogue with the scheme of obtaining \eqref{eq1lemma532} and \eqref{eq2lemma532}, we derive by~lemma~2 for convolutions from~\cite{Y10-AiT:e}, as $t\to\infty$, the following asymptotic representations
\begin{align*}
\int_0^tm_1(t-s,&x,0)g_n(m_1(t,0,y),\dots,m_{n-1}(t,0,y))\rd s\\
&= (1+a_{d,\alpha,n}(t))m_1(t,x,0)\int_0^{\infty}g_2(m_1(s,0,y),\dots,m_{n-1}(s,0,y))\rd s,\\
\int_0^{t}m_1(t-s,&x,0)g_n(m_1(s,0),\dots,m_{n-1}(s,0))\rd s\\
&=(1+b_{d,\alpha,n}(t))g_n(m_1(t,0),\dots,m_{n-1}(t,0))\int_0^{\infty}m_1(s,x,0)\rd s,
\end{align*}
where $a_{d,\alpha,n}(t)\to0$, $b_{d,\alpha,n}(t)\to0$ as $t\to\infty$. By analogue with the scheme of obtaining the asymptotic relations for $m_2(t, x, y)$ and $m_2(t, x)$ above and equations~\eqref{eq146} and \eqref{eq147}, as $t\to\infty$, for $d/\alpha\in(1/2, \infty)$, it is possible to obtain the following relations
\[
m_n(t,x,y)\sim \left(C_1(x,y) + C_1(x,0)\int_0^{\infty}g_n(m_1(s,0,y),\dots,m_{n-1}(s,0,y))\rd s\right)u_1(t),
\]
as well as
\begin{alignat*}{3}
m_n(t,x) &\sim C_1(x)v_1(t)
&\qquad \text{for~} d/\alpha\in(1/2,1],\\
m_n(t,x) &\sim \left(C_1(x) + \chi_0(x)\int_0^{\infty}m_1(s,x,0)\rd s\right)v_1(t)
&\quad \text{for~} d/\alpha\in(1, \infty),
\end{alignat*}
where $\chi_0(x)$ is defined in~\eqref{chi0}.

Then the representation
\begin{align*}
g_{n+1}(m_1(t),\dots,m_n(t)) &= \frac{\beta^{(2)}}{2} \sum_{i=1}^{n}\frac{(n+1)!}{i!(n+1-i)!}m_i(t)m_{n+1-i}(t) \\
&\qquad+ \sum_{r=3}^{n+1}\frac{\beta^{(r)}}{r!}\sum_{\substack{
i_1,\dots,i_r>0,\\
i_1+\cdots+i_r=n+1
}} \frac{(n+1)!}{i_1!\cdots i_r!} m_{i_1}(t)\cdots m_{i_r}(t),
\end{align*}
justifies the validity of the relation~\eqref{gn(m)} and simultaneously the validity of the relations~\eqref{mn(t,x,y)} and \eqref{mn(t,x)}, which completes the proof of the theorem.
\end{proof}

\section{Acknowledgement}
The work of the authors is supported by the RFFR (grant 20-01-00487).

\label{lastpage}
\end{document}